\newtheorem{theorem}{Theorem}[section]
\newtheorem{lemma}[theorem]{Lemma}
\newtheorem{proposition}[theorem]{Proposition}
\newtheorem{corollary}[theorem]{Corollary}
\newtheorem*{theorem*}{Theorem}
\theoremstyle{remark}
\newtheorem{remark}[theorem]{Remark}
\newtheorem{definition}[theorem]{Definition}
\numberwithin{equation}{section}
\newcommand{\Z}{\mathbb{Z}}
\newcommand{\N}{\mathbb{N}}
\newcommand{\Q}{\mathbb{Q}}
\newcommand{\R}{\mathbb{R}}
\newcommand{\C}{\mathbb{C}}
\begin{document}
\title[Equiangular Tight Frames from Complex Seidel Matrices]{Equiangular Tight Frames from Complex  Seidel Matrices Containing Cube Roots of Unity}

\author{Bernhard G. Bodmann}

\address{Department of Mathematics \\ University of Houston \\ Houston, TX 77204-3008 \\USA}
\email{bgb@math.uh.edu}

\author{Vern I. Paulsen}

\address{Department of Mathematics \\ University of Houston \\ Houston, TX 77204-3008 \\USA}
\email{vern@math.uh.edu}

\author{Mark Tomforde}

\address{Department of Mathematics \\ University of Houston \\ Houston, TX 77204-3008 \\USA}
\email{tomforde@math.uh.edu}

\thanks{The first author was supported by
NSF Grant DMS-0807399, the second author by NSF Grant DMS-0600191}

\date{\today}

\subjclass[2000]{42C15, 52C17, 05B20}

\keywords{equiangular tight frames, roots of unity, directed graphs}

\begin{abstract}
We derive easily verifiable conditions which characterize when complex Seidel
matrices containing cube roots of unity have exactly two eigenvalues. The existence 
of such matrices is equivalent to the existence of equiangular tight frames for which
the inner product between any two frame vectors is always a common multiple 
of the cube roots of unity. We also exhibit a relationship between these equiangular 
tight frames, complex Seidel matrices, and highly regular, directed graphs. We construct
examples of such frames with arbitrarily many vectors.
\end{abstract}

\maketitle

\section{Introduction}

Equiangular tight frames play an important role in several areas of mathematics, ranging 
from signal processing (see, e.g.\ \cite{Ald93,Cas00,CK07a,CK07b} and references therein) to 
quantum computing
(see, e.g.\ \cite{bodmann-kribs-paulsen:2007,GR08}).
In comparison with the host of results on the construction of equiangular tight frames for finite dimensional real Hilbert spaces \cite{vLS66,Sei68,GS70,BvLS73,HP04},
relatively few means 
are known for constructing equiangular frames in the complex case (see, e.g. \cite{Kal06,Ren07,GR08}). 

The problem of the existence of equiangular frames is known to be equivalent to the 
existence of a certain type of matrix called a \textit{Seidel matrix} \cite{LS73} or {\em signature matrix} \cite{HP04}
with two eigenvalues.
  A matrix $Q$ is a Seidel matrix provided that it is self-adjoint, its diagonal entries are $0$, and its off-diagonal entries are all of modulus one. 
  In the real case, these off-diagonal entries must all be 
  $\pm 1;$ such matrices can then be interpreted as (Seidel) adjacency matrices of graphs.
  The case when these adjacency matrices have two eigenvalues has been characterized in graph-theoretic terms \cite{Sei76,ST81}, see
  also \cite{Moo95}, and
   a fairly complete catalog of all such graphs, when the number of vertices is small, is known.
  Moreover, such graphs 
   are known to exist for an arbitrarily large number of vertices \cite{CalderbankCameronKantorSeidel:1997}.   

In this paper, we study the existence and construction of Seidel matrices with two eigenvalues and off-diagonal entries that are all cube roots of unity. We derive necessary and sufficient conditions for the existence of such matrices and construct examples. In particular, we find arbitrarily large Seidel matrices of this type. We show that the existence of such Seidel matrices is equivalent to the existence of certain highly regular directed graphs, and hence, show that such directed graphs exist on an arbitrarily large number of vertices.

This paper is organized as follows.
We complete the introduction with a more detailed description of equiangular tight frames, Seidel matrices and their relationship. In Section 2, we derive conditions for the existence of such matrices. In Section 3, we refine these conditions and use our relations, in Section 4, to determine the possible sizes of such Seidel matrices for $n \le 100.$ Section 5 discusses the connections with directed graphs.   Section 6 contains examples and Section 7 gives a method for constructing examples of arbitrarily large size.

\subsection{Equiangular  Tight Frames}

Let $\mathcal H$ be a real or complex Hilbert space.  A finite family of vectors $\{ f_1, \ldots, f_n \}$ 
 is called a \emph{frame} provided that  there exist strictly positive real numbers 
$A$ and $B$ such that $$A \| x \|^2 \leq \sum_{i=1}^n | \langle x, f_j \rangle |^2 \leq B \| x \|^2 
\qquad \text{ for all $x \in \mathcal H$} \, .$$  
A frame is said to be a \emph{tight frame} 
if we can choose $A=B$. When $A=B=1,$ then the frame is called a {\em normalized tight frame} or a {\em Parseval frame.} Replacing $f_i$ by $f_i/\sqrt{A}$ always normalizes a tight frame.  

One can show (see \cite[p.~21]{Cas00}, for example) that 
a family $\{f_1, \ldots, f_n \}$ is a tight frame with constant $A$ if and only if 
\begin{equation} \label{Parseval-Id} 
         x = \frac{1}{A} \sum_{i=1}^n \langle x, f_i \rangle f_i \qquad \text{ for all $x \in \mathcal H$.}
\end{equation} 

There is a natural equivalence relation for tight frames, motivated by
simple operations on the frame vectors which preserve identity (\ref{Parseval-Id}).

We say that two tight frames $\{f_1, f_2, \dots f_n\}$ and
$\{g_1, g_2 , \dots g_n\}$ for $\mathcal H$ are \emph{unitarily equivalent}
if there exists a unitary operator $U$ on $\mathcal H$ such that for all $i\in \{1,2 ,\dots n\}$, 
$g_i = U f_i$. We say that they are \textit{switching
equivalent} if
there exist a unitary operator $U$ on $\mathcal H$, a permutation $\pi$
on $\{1, 2, \dots n\}$ and a family of unimodular constants $\{\lambda_1, \lambda_2, \dots \lambda_n\}$
such that for all $i\in \{1,2 ,\dots n\}$, $g_i = \lambda_i U f_{\pi(i)}$. 
If $\mathcal H$ is a real Hilbert space, $U$ is understood to be orthogonal,
and all $\lambda_i \in \{\pm 1\}$.

In this paper we shall be concerned only with Parseval frames for the $k$-dimen\-sio\-nal complex Hilbert space 
$\mathbb C^k$, equipped with the canonical inner product. We use the term \emph{$(n,k)$-frame} to 
mean a Parseval frame of $n$ vectors for $\C^k$.

Every such Parseval frame gives rise to an isometric embedding of $\C^k$ into $\C^n$ via the map
$$
   V: \C^k \to \C^n,  \, (Vx)_j =  \langle x, f_j\rangle \mbox{ for all } j\in \{1, 2, \dots n\} \, 
$$ which is called the {\em analysis operator} of the frame.
Because $V$ is linear, we may identify $V$ with an $n \times k$ matrix and the vectors $\{ f_1, \ldots, f_n \}$ are the respective columns of $V^*.$  Conversely, given any $n \times k$ matrix $V$ that defines an isometry, if we let $\{ f_1, \ldots, f_n \}$ denote the columns of $V^*,$ then this set is an $(n,k)$-frame and $V$ is the analysis operator of the frame.  

If $V$ is the analysis operator of an $(n,k)$-frame, then since $V$ is an isometry, we see that $V^*V = I_k$ and the $n \times n$ matrix $VV^*$ is a self-adjoint projection of rank $k.$  Note that $VV^*$ has entries $(VV^*)_{ij} = ( \langle f_j, f_i \rangle )$.  Thus, $VV^*$ is the \emph{Grammian matrix} (or \emph{correlation matrix}) of the set of vectors. Conversely, any time we have an $n \times n$ self-adjoint projection $P$ of rank $k$, we can always factor it as $P=VV^*$ for some $n \times k$ matrix $V$.  In this case we have $V^*V = I_k$ and hence $V$ is an isometry and the columns of $V^*$ are an $(n,k)$-frame.  Moreover, if $P = WW^*$ is another factorization of $P$, then there exists a unitary $U$ such that $W^*= UV^*$, and the frame corresponding to $W$ differs from the frame corresponding to $V$ by applying the same unitary to all frame vectors, which is included in our equivalence relation. However, switching equivalence 
is coarser than just identifying all frames with the same Grammian.
In \cite{HP04} it is shown that it corresponds to
identifying frames for which the Grammians can be obtained from each other by conjugation with diagonal unitaries
and permutation matrices.

\begin{definition}
An $(n,k)$-frame $\{f_1, \ldots, f_n \}$ is called {\em uniform} if there is a constant $u>0$ such that $\|f_i \|=u$ for all $i.$   An $(n,k)$-frame is called {\em equiangular} if all of the frame vectors are non-zero and the angle between the lines generated by any pair of frame vectors is a constant, 
that is, provided that there is a constant $b$ such that $| \langle f_i/\|f_i\|, f_j/\|f_j\| \rangle |=b$ for all $i \neq j$.  
\end{definition}

Many places in the literature define equiangular to mean that the $(n,k)$-frame is uniform and that there is a constant $c$ so that $| \langle f_i, f_j \rangle |= c$ for all $i \neq j.$ However, the assumption that the frame is uniform is not needed in our definition as the following result shows.

\begin{proposition} Let $\{ f_1, \ldots , f_n \}$ be a tight frame for 
$\C^k.$ If all frame vectors are non-zero and if there is a constant $b$ so that $| \langle f_i/\|f_i\|, f_j/\|f_j\| \rangle | =b$ for all $i \neq j,$ then $\|f_i\|= \|f_j\|$ for every $i$ and $j.$
\end{proposition}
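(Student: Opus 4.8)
The plan is to feed the frame vectors back into the reconstruction identity~(\ref{Parseval-Id}). Let $A$ denote the tight-frame constant. First I would apply $x = \frac{1}{A}\sum_{i=1}^n \langle x, f_i\rangle f_i$ with the choice $x = f_j$ and then pair both sides with $f_j$. Because the inner product is conjugate symmetric, $\langle f_j, f_i\rangle\langle f_i, f_j\rangle = |\langle f_i, f_j\rangle|^2$, and the whole computation collapses to the scalar identity
\[
  A\,\|f_j\|^2 = \sum_{i=1}^n |\langle f_i, f_j\rangle|^2 .
\]
This says that the total energy of $f_j$ against the frame is governed entirely by $A$ and by $\|f_j\|$.

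Next I would isolate the diagonal term $i=j$, which contributes $\|f_j\|^4$, and rewrite the remaining terms using the hypothesis. Writing the equiangularity condition in unnormalized form, $|\langle f_i, f_j\rangle| = b\,\|f_i\|\,\|f_j\|$ for $i \neq j$, each off-diagonal summand becomes $b^2\|f_i\|^2\|f_j\|^2$. Setting $S = \sum_{i=1}^n \|f_i\|^2$ for the total energy, the off-diagonal sum equals $b^2\|f_j\|^2\bigl(S - \|f_j\|^2\bigr)$, so the identity above reads
\[
  A\,\|f_j\|^2 = \|f_j\|^4 + b^2\|f_j\|^2\bigl(S - \|f_j\|^2\bigr).
\]

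Since every frame vector is non-zero, I may cancel a factor of $\|f_j\|^2$ and rearrange to obtain
\[
  (1 - b^2)\,\|f_j\|^2 = A - b^2 S .
\]
The right-hand side is independent of $j$. Hence, provided $b \neq 1$, I can solve $\|f_j\|^2 = (A - b^2 S)/(1 - b^2)$, a value that does not depend on $j$, which yields $\|f_i\| = \|f_j\|$ for all $i$ and $j$. The only delicate point — and the step I expect to be the real obstacle — is the degenerate value $b = 1$, where this cancellation breaks down. By the equality case of Cauchy--Schwarz, $b = 1$ would force every pair of frame vectors to be parallel, so the frame could span at most a one-dimensional space; this cannot happen once $k \geq 2$, and so $b < 1$ is automatic in the nondegenerate regime and the argument goes through.
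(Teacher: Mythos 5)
Your argument is correct and is essentially the paper's own proof: the paper normalizes to a Parseval frame so that the Grammian $P=(\langle f_j,f_i\rangle)$ is a rank-$k$ projection with $\operatorname{tr} P = k$ (so your $S$ equals $k$ when $A=1$), equates the $(i,i)$ entries of $P=P^2$, and cancels $\|f_i\|^2$ exactly as you do. If anything you are slightly more careful at the endpoint: the paper's concluding equation, in your notation $(1-b^2)\|f_j\|^2 = A - b^2 S$, forces constancy only when $b\neq 1$, a degeneracy the paper passes over silently but which your Cauchy--Schwarz remark correctly disposes of for $k\geq 2$.
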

\begin{proof}
Without loss of generality, we may assume that the frame is a Parseval frame, so that $P= ( \langle f_j, f_i \rangle)_{i,j=1}^n$ is a projection of rank $k$. Hence, $P=P^2$ and so upon equating the $(i,i)$-th entry and using the fact that the trace of $P$ is $k$, we see that
$\|f_i\|^2 = \langle f_i, f_i \rangle = \sum_{j=1}^n \langle f_j,f_i \rangle \langle f_i, f_j \rangle = \|f_i\|^4 + \sum_{j \neq i}^n b^2\|f_i\|^2\|f_j\|^2
= \|f_i\|^4 + b^2\|f_i\|^2(k - \|f_i\|^2),$
which shows that $\|f_i\|^2$ is a (non-zero) constant independent of~$i.$
\end{proof}

In \cite{HP04} a family of $(n,k)$-frames was introduced that was called {\em $2$-uniform frames}. It was then proved that a Parseval 
frame is $2$-uniform if and only if it is equiangular. Thus, these terminologies are interchangeable in the literature, but the equiangular terminology has become more prevalent.

\subsection{Seidel Matrices and Equiangular Tight Frames}

At this point we revisit an approach that has been used to construct equiangular tight frames \cite{LS73}. The previous section shows that an $(n,k)$-frame is determined up to unitary equivalence by
its Grammian matrix.  This reduces the problem of constructing an
$(n,k)$-frame to constructing an $n \times n$ self-adjoint projection $P$ of rank $k$.  

If an $(n,k)$-frame $\{f_1, f_2, \dots f_n\}$ is uniform, then it is known that $\|f_i\|^2= k/n$ for all
$i \in \{1, 2, \dots n\}$.
It is shown in \cite[Theorem~2.5]{HP04} 
that if $\{ f_1, \ldots, f_n \}$ is an equiangular $(n,k)$-frame, then for all $i \neq j$, 
$|\langle f_j, f_i \rangle | = c_{n,k} = \sqrt{\frac{k(n-k)}{n^2(n-1)}}$.
Thus we may write $$VV^* = (k/n) I_n + c_{n,k} Q$$ where 
$Q$ is a self-adjoint $n\times n$ matrix satisfying $Q_{ii} = 0$ for all $i$ and $|Q_{ij} | = 1$ for all $i \neq j$.  
This matrix $Q$ is called the \emph{Seidel matrix} or \emph{signature matrix} associated with the $(n,k)$-frame.

The following theorem characterizes the signature matrices of equiangular $(n,k)$-frames.

\begin{theorem}[Theorem~3.3 of \cite{HP04}] \label{2-uniform-char}
Let $Q$ be a self-adjoint $n \times n$ matrix with $Q_{ii}=0$ and $|Q_{ij}|=1$ for all $i \neq j$.  Then the following are equivalent:
\begin{itemize}
\item[(a)] $Q$ is the signature matrix of an equiangular $(n,k)$-frame for some $k$;
\item[(b)] $Q^2 = (n-1) I + \mu Q$ for some necessarily real number $\mu$; and
\item[(c)] $Q$ has exactly two eigenvalues.
\end{itemize}
\end{theorem}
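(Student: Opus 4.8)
The plan is to prove the equivalence through the chain $\text{(a)} \Rightarrow \text{(c)} \Leftrightarrow \text{(b)} \Rightarrow \text{(a)}$, after recording a few elementary facts that hold for \emph{any} such $Q$. Since $Q=Q^*$, all eigenvalues are real. Because the diagonal vanishes, $\operatorname{tr}(Q)=0$; and since $(Q^2)_{ii}=\sum_j Q_{ij}Q_{ji}=\sum_{j\neq i}|Q_{ij}|^2=n-1$, the matrix $Q^2$ has constant diagonal $n-1$, giving $\operatorname{tr}(Q^2)=n(n-1)$. Finally $Q$ cannot be a scalar multiple of $I$ (the zero diagonal would force $Q=0$, contradicting $|Q_{ij}|=1$), so $Q$ always has at least two distinct real eigenvalues. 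For (b)$\Leftrightarrow$(c), I would first justify the word ``necessarily'': taking adjoints in $Q^2=(n-1)I+\mu Q$ and using $Q=Q^*\neq 0$ forces $\mu=\bar\mu$. Then (b) says the minimal polynomial of $Q$ divides $x^2-\mu x-(n-1)$, which together with the presence of two distinct eigenvalues pins its degree at exactly two, i.e.\ (c). Conversely, if $Q$ has exactly two eigenvalues $\lambda_1\neq\lambda_2$, its minimal polynomial is $(x-\lambda_1)(x-\lambda_2)$, so $Q^2=(\lambda_1+\lambda_2)Q-\lambda_1\lambda_2 I$; taking the trace and using $\operatorname{tr}(Q)=0$ and $\operatorname{tr}(Q^2)=n(n-1)$ yields $-\lambda_1\lambda_2 n=n(n-1)$, hence $-\lambda_1\lambda_2=n-1$ and $\mu=\lambda_1+\lambda_2$, which is (b).

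For (a)$\Rightarrow$(c) the argument is immediate: by construction the Grammian of the frame is $P=(k/n)I+c_{n,k}Q$, a self-adjoint projection of rank $k$ with $0<k<n$, so $P$ has exactly the two eigenvalues $0$ and $1$; since $Q=c_{n,k}^{-1}(P-(k/n)I)$ is an affine function of $P$, it too has exactly two eigenvalues. The substantive direction is (b)$\Rightarrow$(a), where the strategy is to build a projection affinely from $Q$. I would seek real scalars $a$ and $b>0$ so that $P:=aI+bQ$ satisfies $P=P^*=P^2$. Expanding $P^2$ with the relation $Q^2=(n-1)I+\mu Q$ and matching the coefficients of $I$ and of $Q$ gives the system
\[
 a^2+b^2(n-1)=a, \qquad 2a+b\mu=1,
\]
whose solution is $b=\bigl(\mu^2+4(n-1)\bigr)^{-1/2}>0$ and $a=(1-b\mu)/2$. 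This exhibits a genuine self-adjoint projection $P=aI+bQ$ with $\operatorname{tr}(P)=na=:k$, so that $P_{ii}=a=k/n$ and $|P_{ij}|=b$ for $i\neq j$; factoring $P=VV^*$ then produces a uniform equiangular $(n,k)$-frame whose signature matrix is $Q$, and $b$ must coincide with $c_{n,k}$ by the formula quoted before the theorem.

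The step I expect to be the main obstacle is this last construction: one must check that the coefficient-matching system really has a solution with $b>0$ (which is where $\mu^2+4(n-1)>0$ enters) and, more delicately, that the resulting $P$ is a \emph{bona fide} rank-$k$ projection with $k$ an integer in the admissible range $1\le k\le n-1$. The latter follows because $k=\operatorname{tr}(P)$ is automatically a nonnegative integer for an orthogonal projection, and the degenerate values $k\in\{0,n\}$ are excluded since they would make $P$, and hence $Q$, scalar. The remaining verification that $P$ has the correct constant diagonal $k/n$ and constant off-diagonal modulus to qualify as an equiangular Grammian is routine algebra, but it is the place where the bookkeeping confirming that $Q$ is genuinely the signature matrix of the constructed frame must be carried out with care.
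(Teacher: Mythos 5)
This theorem is stated in the paper as a quotation of Theorem~3.3 of \cite{HP04} without an internal proof, so there is nothing in the paper itself to compare against; your blind reconstruction is correct and follows the standard argument from \cite{HP04}: the affine correspondence $P=aI+bQ$ between rank-$k$ projections with constant diagonal and Seidel matrices for (a)$\Leftrightarrow$(b), and the minimal-polynomial plus trace computation (using $\operatorname{tr}(Q)=0$, $\operatorname{tr}(Q^2)=n(n-1)$, and the fact that $Q$ is never scalar) for (b)$\Leftrightarrow$(c). All the delicate points you flagged do check out as you handle them --- $b=\bigl(\mu^2+4(n-1)\bigr)^{-1/2}>0$ and $a=(1-b\mu)/2$ solve the coefficient system, $k=\operatorname{tr}(P)$ is automatically an integer because $P$ is an orthogonal projection, the degenerate cases $k\in\{0,n\}$ are excluded since they would force $Q$ to be scalar, and $b=c_{n,k}$ follows from the quoted uniqueness of the equiangularity constant.
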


This result reduces the problem of constructing equiangular $(n,k)$-frames to the problem of constructing Seidel matrices
with two eigenvalues.  
In particular, Condition~(b) is particularly useful since it gives an easy-to-check condition to verify that a 
matrix $Q$ is the signature matrix of an equiangular tight frame.

Furthermore, if $Q$ is a matrix satisfying any of the three equivalent conditions 
in Theorem~\ref{2-uniform-char}, and if $\lambda_1 < 0 < \lambda_2$ are its two eigenvalues, then 
the parameters $n$, $k$, $\mu$, $\lambda_1$, and $\lambda_2$ satisfy
the following properties:

\begin{align}
\mu &= (n-2k) \sqrt{\frac{n-1}{k(n-k)}} = \lambda_1 + \lambda_2, \qquad k = \frac{n}{2} - \frac{\mu n}{2 \sqrt{4(n-1) + \mu^2}}  \label{param-eqs} \\
 \lambda_1&= -\sqrt{\frac{k(n-1)}{n-k}}, \qquad \lambda_2 = \sqrt{\frac{(n-1)(n-k)}{k}}, \qquad n = 1 - \lambda_1 \lambda_2. \notag
\end{align}

\noindent These equations follow from the results in \cite[Proposition~3.2]{HP04} and \cite[Theorem~3.3]{HP04}, and by solving for $\lambda_1$ and $\lambda_2$ from the given equations.

In the case when the entries of $Q$ are all real, we have that the diagonal entries of $Q$ are $0$ and the off-diagonal entries of $Q$ are $\pm 1$.  These matrices can be seen to be Seidel adjacency matrices of a graph \cite{Sei76} on $n$ vertices, and it has been proven that in the real case there is a one-to-one correspondence between the switching equivalence classes of real equiangular tight frames and regular two-graphs \cite[Theorem~3.10]{HP04}.

In a similar vein, we now apply switching equivalence to complex Seidel matrices, in order to derive easily
verifiable conditions which characterize when they have two eigenvalues.

\begin{definition}
Two Seidel matrices $Q$ and $Q'$ are
\emph{switching equivalent} if they can be obtained
from each other by conjugating with a diagonal
unitary and a permutation matrix.

If $Q$ is a Seidel matrix, we say that $Q$ is in a
\emph{standard form} if its first row and column contains
only $1$'s except on the diagonal,
 as shown in \eqref{Q-form} below.  
We
say that it is \emph{trivial} if it has a
standard form which has all of its off-diagonal entries equal to $1$
and \emph{nontrivial} if at least one off-diagonal entry is not equal
to $1.$
\end{definition}

One can verify by conjugation with an appropriate diagonal unitary 
that the equivalence class of any Seidel matrix contains a matrix
of standard form, so we only need
to examine when matrices of this form
have two eigenvalues. 

Since, in the real case, the off-diagonal entries of $Q$ are in the set $\{ -1, 1 \}$, it seems promising in the complex 
situation to harness combinatorial techniques and consider the case when the off-diagonal entries of $Q$ are $m$\textsuperscript{th} 
roots of unity for some $m \geq 3$. We conjecture that these cases will give a description of families of complex $2$-uniform 
frames in analogy with the characterization that has been obtained in the real case.  The purpose of this paper is to examine the 
simplest complex case (when $m=3$), and to construct new frames in this setting.

\section{Signature matrices with entries in the cube roots of unity}

In this section we consider nontrivial signature matrices whose off-diagonal entries are cube roots of unity.  We obtain a number of necessary and sufficient conditions for such a signature matrix of an equiangular $(n,k)$-frame to exist.  These results are useful because they allow us to rule out many values of $n$ and $k$ in the  search for examples of such frames. 

Let $\omega = -\frac{1}{2} + i \frac{\sqrt{3}}{2}$.  Then the set $\{ 1, \omega, \omega^2 \}$ is the set of cube roots of unity.  Note also that $\omega^2 = \overline{\omega}$ and $1 + \omega + \omega^2 = 0$.  

\begin{definition}
We call a matrix $Q$ a \emph{cube root Seidel matrix} 
if it is self-adjoint, has vanishing diagonal entries, and off-diagonal entries which are all cube roots of unity.
If $Q$ has exactly two eigenvalues, then we say that it is the \emph{cube root signature matrix} of an equiangular tight frame.
\end{definition}

All equivalence classes of cube root signature matrices contain  representatives in standard form.

\begin{lemma} \label{permutation-lemma}
If $Q'$ is an $n \times n$ cube root Seidel matrix, then it is switching equivalent to a cube root Seidel matrix of the form
\begin{equation}
\label{Q-form}
Q = \begin{pmatrix} 0 & 1 & \cdots & \cdots & 1 \\ 1 & 0 & * & \cdots & * \\ \vdots & * & \ddots & \ddots & \vdots \\ \vdots & \vdots & \ddots & \ddots & *  \\ 1 & * & \cdots & * & 0 \end{pmatrix}
\end{equation}
where the $*$'s are cube roots of unity. Moreover, $Q'$ is the signature matrix of an equiangular $(n,k)$-frame if and only if
$Q$ is the signature matrix of an equiangular $(n,k)$-frame. 
\end{lemma}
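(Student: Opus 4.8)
The plan is to produce the diagonal unitary explicitly from the first row of $Q'$ and then invoke the results already established in the excerpt. First I would fix the entries in the first row: since $Q'$ is a cube root Seidel matrix, its $(1,j)$ entries for $j \geq 2$ are cube roots of unity, say $Q'_{1j} = \zeta_j$ with $\zeta_j \in \{1,\omega,\omega^2\}$, and by self‑adjointness $Q'_{j1} = \overline{\zeta_j}$. I would then define the diagonal unitary $D = \operatorname{diag}(1, \overline{\zeta_2}, \ldots, \overline{\zeta_n})$ and set $Q = D Q' D^*$. A direct check shows that $(DQ'D^*)_{1j} = \overline{\zeta_1}\, Q'_{1j}\, \zeta_j$; taking $\zeta_1 = 1$ this equals $\zeta_j \cdot \zeta_j^{-1}\cdot\overline{\zeta_j}\cdots$—more cleanly, with $d_i$ the $i$‑th diagonal entry of $D$ one has $(DQ'D^*)_{ij} = d_i Q'_{ij} \overline{d_j}$, so $(DQ'D^*)_{1j} = d_1 \zeta_j \overline{d_j} = \zeta_j \overline{\overline{\zeta_j}} = \zeta_j \zeta_j = $ … here I must be careful with the conjugation, and the correct choice is $d_j = \zeta_j^{-1} = \overline{\zeta_j}$ for $j\geq 2$, $d_1 = 1$, giving $(DQ'D^*)_{1j} = 1\cdot\zeta_j\cdot\zeta_j = \zeta_j^2$, which is wrong, so the clean choice is instead $d_j = \overline{\zeta_j}$ acting as $d_1 Q'_{1j}\overline{d_j} = \zeta_j\cdot\zeta_j$—the routine check is to pick the phases so that the first row becomes all $1$'s, and this is exactly the standard computation that the diagonal conjugation rescales row $i$ and column $i$ by conjugate unimodular factors.

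Once the phases are chosen correctly, I would verify that $Q$ remains a cube root Seidel matrix: conjugation by a diagonal unitary preserves self‑adjointness and the zero diagonal, and since each $d_i$ is itself a cube root of unity (being the conjugate of a cube root of unity) and the product of cube roots of unity is again a cube root of unity, every off‑diagonal entry $(DQ'D^*)_{ij} = d_i Q'_{ij}\overline{d_j}$ is again a cube root of unity. This establishes the form in \eqref{Q-form}, with the first row and column equal to $1$ off the diagonal and the interior $*$'s cube roots of unity.

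For the second assertion I would appeal directly to the framework set up in the excerpt. By the definition of switching equivalence for Seidel matrices (conjugation by a diagonal unitary and a permutation matrix) and the discussion following the factorization $P = VV^*$, switching‑equivalent Seidel matrices correspond to switching‑equivalent frames, since conjugating the Grammian $VV^* = (k/n)I_n + c_{n,k}Q'$ by $D$ yields $(k/n)I_n + c_{n,k}\, DQ'D^*$, which is again a rank‑$k$ projection (conjugation by a unitary preserves rank and the projection property). Thus $Q'$ has exactly two eigenvalues if and only if $Q$ does, and because $n$, $k$, $c_{n,k}$ are unchanged, $Q'$ is the signature matrix of an equiangular $(n,k)$‑frame precisely when $Q$ is, by Theorem~\ref{2-uniform-char}.

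The only genuine obstacle is bookkeeping: getting the conjugation convention right so that the chosen phases clear the first row and column simultaneously (they do, because self‑adjointness forces $Q'_{j1} = \overline{Q'_{1j}}$, so normalizing the column automatically normalizes the row). Everything else is an immediate consequence of the unitary invariance of the spectrum and of the rank‑$k$ projection property, together with the unchanged parameters $n$ and $k$.
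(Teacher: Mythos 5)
Your proposal is correct and follows essentially the same route as the paper: conjugate by a diagonal unitary built from the first-row entries of $Q'$, note that cube roots of unity are closed under products and conjugates so the result is again a cube root Seidel matrix (with self-adjointness clearing the first column along with the first row), and deduce the ``moreover'' clause from unitary invariance of the spectrum (equivalently, of the rank-$k$ projection Grammian) together with Theorem~\ref{2-uniform-char}. The only blemish is your unresolved waffling over the conjugation convention --- with $Q = DQ'D^*$ the correct choice is $d_1 = 1$ and $d_j = Q'_{1j}$ for $j \ge 2$, giving $(DQ'D^*)_{1j} = Q'_{1j}\overline{Q'_{1j}} = 1$ --- a bookkeeping point you rightly call routine, and one on which the paper's own proof (which writes $U^*Q'U$ with $U_{jj} = Q'_{1j}$, yielding $(Q'_{1j})^2$ in the first row as literally stated) slips in exactly the same way.
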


\begin{proof}
Suppose that $Q'$ is an $n \times n$ cube root Seidel matrix. Then $Q'$ is self-adjoint with $|Q_{ij}|=1$ for $i \neq j$, and by Theorem~\ref{2-uniform-char} we have that $(Q')^2 = (n-1) I + \mu Q'$ for some real number $\mu$.  If we let $U$ be the diagonal matrix $$U := \begin{pmatrix} 1 & & & & \\ & Q'_{12} & & & \\ & & Q'_{13} & & \\ & & & \ddots & \\ & & & & Q'_{1n} \end{pmatrix}$$ then $U$ is a unitary matrix (since $|Q'_{ij}| =1$ when $i \neq j$), and we see that $Q := U^* Q' U$ is a self-adjoint $n \times n$ matrix with $Q_{ii} = 0$ and $|Q_{ij}|=1$ for $i \neq j$. We see that the off-diagonal elements of $Q$ are cube roots of unity and $Q$ has the form shown in \eqref{Q-form}.  (To see that the off-diagonal elements in the first row and column are $1$'s, recall that $Q'_{ij} = \overline{Q'_{ji}}$.)  Thus $Q$ is a cube root Seidel matrix that is unitarily equivalent to $Q'$. Since $Q$ and $Q'$ have the same eigenvalues, if one of them
is the signature matrix of an equiangular $(n,k)$-frame, then the same holds for the other matrix.
\end{proof}

Next, we present the characterization of cube root Seidel matrices with exactly two eigenvalues
after an elementary insight.

\begin{lemma} \label{1-lemma}
If $a, b, c \in \R$ and $a1 + b\omega + c\omega^2 =0$, then $a=b=c$.
\end{lemma}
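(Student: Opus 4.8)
The plan is to exploit the fact that, although $1$, $\omega$, $\omega^2$ are three complex numbers, they are linearly dependent over $\R$, so the stated equation really encodes only two independent real conditions. The cleanest route is to first use the relation $1 + \omega + \omega^2 = 0$ (noted just before the lemma) to eliminate one of the three terms, and then to invoke the $\R$-linear independence of the remaining two.

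Concretely, I would substitute $\omega^2 = -1 - \omega$ into the hypothesis $a + b\omega + c\omega^2 = 0$, obtaining
\[
   (a - c) \cdot 1 + (b - c)\,\omega = 0 \, .
\]
The goal is then reduced to showing that the coefficients $a - c$ and $b - c$ both vanish, which gives $a = c$ and $b = c$, hence $a = b = c$.

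To finish, I would argue that $\{1, \omega\}$ is linearly independent over $\R$. This is where the only real content lies, and it is immediate from the explicit form $\omega = -\tfrac12 + i\tfrac{\sqrt3}{2}$: since $\omega$ has nonzero imaginary part, it cannot be a real multiple of $1$. Equivalently, and perhaps more transparently for the reader, I would separate the displayed equation into real and imaginary parts. The imaginary part reads $(b-c)\tfrac{\sqrt3}{2} = 0$, forcing $b = c$; substituting back, the real part forces $a = c$ as well.

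I do not expect any genuine obstacle here, as the statement is elementary; the only point requiring a word of justification is the $\R$-linear independence of $1$ and $\omega$, which follows at once from $\operatorname{Im}\omega \neq 0$. An equally valid alternative, avoiding the preliminary substitution, is to plug the explicit values of $\omega$ and $\omega^2 = \overline{\omega}$ directly into $a + b\omega + c\omega^2 = 0$ and read off the two scalar equations $a - \tfrac12(b+c) = 0$ and $\tfrac{\sqrt3}{2}(b - c) = 0$, which together yield $a = b = c$.
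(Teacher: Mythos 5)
Your proposal is correct, and it is essentially the paper's argument: the paper simply substitutes $\omega = -\tfrac12 + i\tfrac{\sqrt3}{2}$ and $\omega^2 = \overline{\omega}$ into $a + b\omega + c\omega^2 = 0$ and reads off the real part $a - \tfrac{b}{2} - \tfrac{c}{2} = 0$ and the imaginary part $\tfrac{\sqrt3}{2}(b-c) = 0$, which is exactly the ``alternative'' you give in your last paragraph. Your primary route (eliminating $\omega^2$ via $1+\omega+\omega^2=0$ and then invoking the $\R$-linear independence of $\{1,\omega\}$) is only a mild repackaging of the same two real equations, so there is no substantive difference in method.
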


\begin{proof}
If $a1 + b\omega + c\omega^2 =0$, then $a1 + b\left(-\frac{1}{2} + i \frac{\sqrt{3}}{2}\right) + c\left(-\frac{1}{2} - i \frac{\sqrt{3}}{2}\right) =0$, and hence $\left(a-\frac{b}{2}-\frac{c}{2}\right) + i\frac{\sqrt{3}}{2}\left(b-c\right) =0$.  It follows that $a-\frac{b}{2}-\frac{c}{2} = 0$ and $b-c = 0$.  Thus $a=b=c$.
\end{proof}

\begin{proposition} \label{number-omega-row-col}
Let $Q$ be a cube root Seidel matrix in standard form, and suppose $Q$ satisfies the equation $$Q^2 = (n-1)I + \mu Q.$$  Then $e := \frac{n-\mu-2}{3}$ is an integer, and for any $j$ with $2 \leq j \leq n$, the 
$j$\textsuperscript{th} column of $Q$ (and likewise the $j$\textsuperscript{th} row) contains $e$ entries equal to $\omega$, contains $e$ entries equal to $\omega^2$, and contains $e+\mu+1=\frac{n+2\mu+1}{3}$ entries equal to $1$.  
\end{proposition}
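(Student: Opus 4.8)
The plan is to extract the column counts directly from a single scalar equation obtained by reading off one off-diagonal entry of the identity $Q^2 = (n-1)I + \mu Q$, and then to invoke Lemma~\ref{1-lemma} to force a balance between the occurrences of $\omega$ and $\omega^2$. First I would fix $j$ with $2 \le j \le n$ and compute the $(1,j)$ entry of both sides. Since $Q$ is in standard form, $Q_{1j}=1$, so the right-hand side contributes $\mu$. For the left-hand side, $(Q^2)_{1j} = \sum_{k=1}^n Q_{1k}Q_{kj}$; because $Q_{11}=0$ and $Q_{1k}=1$ for every $k \ge 2$ (again by standard form), this collapses to $\sum_{k=2}^n Q_{kj}$. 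Thus the key identity is $\sum_{k=2}^n Q_{kj} = \mu$.

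Next I would analyze this sum combinatorially. Among the indices $2 \le k \le n$, the term $k=j$ contributes $Q_{jj}=0$, while the remaining $n-2$ entries $Q_{kj}$ are cube roots of unity. Writing $p$, $q$, $r$ for the numbers of these entries equal to $1$, $\omega$, $\omega^2$ respectively (so $p+q+r=n-2$), the identity becomes $p + q\omega + r\omega^2 = \mu$, i.e.\ $(p-\mu)\cdot 1 + q\,\omega + r\,\omega^2 = 0$. Since $p$, $q$, $r$, and $\mu$ are all real, Lemma~\ref{1-lemma} applies and yields $p-\mu = q = r$. Setting $e := q = r$ then gives $p = e+\mu$, and because $e$ is by construction a count of entries it is automatically a nonnegative integer.

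From here the tally for the full $j$\textsuperscript{th} column is immediate: its $n$ entries consist of the single diagonal $0$, the entry $Q_{1j}=1$, and the $p$, $q$, $r$ entries just counted. Hence the column contains $q=e$ entries equal to $\omega$, $r=e$ entries equal to $\omega^2$, and $p+1 = e+\mu+1$ entries equal to $1$. Summing these four contributions to the column length $n$ gives $1 + (e+\mu+1) + e + e = n$, which rearranges to $e = \frac{n-\mu-2}{3}$, simultaneously confirming the formula and, since $e \in \Z$, the integrality claim; the alternate expression $e+\mu+1 = \frac{n+2\mu+1}{3}$ then follows by substitution. For the $j$\textsuperscript{th} row I would use self-adjointness: $Q_{jk} = \overline{Q_{kj}}$ fixes $1$ and interchanges $\omega$ and $\omega^2$, so the row inherits exactly the same three counts as the column.

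The computation is short, and I expect no genuine obstacle; the only point demanding care is the bookkeeping in the sum $\sum_{k=2}^n Q_{kj}$, namely that the index $k=1$ is dropped (so the entry $Q_{1j}=1$ is absent) while the diagonal index $k=j$ is retained as a zero. An off-by-one slip here would misplace the constant in the final formula for $e$, so I would track these excluded and retained indices explicitly when converting the sum to the full-column count.
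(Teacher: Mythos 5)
Your proposal is correct and follows essentially the same route as the paper: both read off the $(1,j)$ entry of $Q^2=(n-1)I+\mu Q$, which collapses to a column sum by the standard form, and both apply Lemma~\ref{1-lemma} to equate the $\omega$ and $\omega^2$ counts, with self-adjointness handling the rows. The only difference is trivial bookkeeping --- you count the $n-2$ entries with $k\geq 2$, $k\neq j$ (your $p,q,r$), while the paper counts the full column ($x_j = p+1$, $y_j=q$, $z_j=r$) --- and you handle the excluded and retained indices correctly.
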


\begin{proof}
For $2 \leq j \leq n$ define 
\begin{align*}
x_j &:= \# \{ i : Q_{ij} = 1 \} \\
y_j &:= \# \{ i : Q_{ij} = \omega \} \\
z_j &:= \# \{ i : Q_{ij} = \omega^2 \}.
\end{align*}
Since the $j$\textsuperscript{th} column of $Q$ has $n-1$ nonzero entries (recall the zero on the diagonal) we have
\begin{equation} \label{a-b-c-eq}
x_j + y_j + z_j = n-1.
\end{equation}
Also, since $Q^2 = (n-1)I + \mu Q$ we see that for $j \geq 2$ we have $$\mu = \mu Q_{1j} =  [(n-1)I + \mu Q]_{1j} = Q^2_{1j} = (x_j-1) 1 + y_j \omega + z_j \omega^2.$$  Thus $(x_j-\mu-1)1 + y_j \omega + z_j \omega^2 = 0$, and by Lemma~\ref{1-lemma} we have 
\begin{equation} \label{a-b-c-eq-2}
x_j-\mu-1=y_j=z_j.
\end{equation}
Thus \eqref{a-b-c-eq} becomes $x_j + 2 (x_j - \mu -1) = n-1$ so that
\begin{equation} \label{x-eqn}
x_j = \frac{n+2\mu+1}{3}.
\end{equation}
and \eqref{a-b-c-eq-2} gives
\begin{equation} \label{y-z-eqn}
y_j=z_j=\frac{n-\mu-2}{3}.
\end{equation}

Since the quantities in \eqref{x-eqn} and \eqref{y-z-eqn} do not depend on $j$ they are valid for any column.  In addition, since $Q=Q^*$ and $\overline{\omega} = \omega^2$, the same equations hold for the rows of $Q$.
\end{proof}

To summarize the consequences of Proposition~\ref{number-omega-row-col}: In the first column of $Q$ there are $n-1$ entries equal to 1 and one entry equal to $0$.  In the $j$\textsuperscript{th} column (for $j \geq 2$) there are $x_j = \frac{n+2\mu+1}{3}$ entries equal to 1, there are $y_j = \frac{n-\mu-2}{3}$ entries equal to $\omega$, there are $z_j = \frac{n-\mu-2}{3}$ entries equal to $\omega^2$, and there is one entry (on the diagonal) equal to $0$.  Note that these values do not depend on the value of $j$ and the same equations hold for the rows.  In particular, if $Q$ is trivial, then $k=1$, and its off-diagonal entries are all $1$'s, $x_j=n-1$ and $y_j=z_j=0$ for $j \ge 2$.

\section{Equations for nontrivial cube root signature matrices} \label{Eqs-sec}

Suppose that $Q$ is a nontrivial cube root Seidel matrix.  Also suppose that $Q$ is in standard form and that $Q$ satisfies $Q^2 = (n-1)I + \mu Q$.  Since $Q$ is nontrivial we have that $Q_{ij} = \omega$ for some $2 \leq i,j \leq n$ with $i \neq j$.  For these values of $i$ and $j$ let 
\begin{align*}
\alpha &= \# \{ k : Q_{ik} = \omega \text{ and } Q_{kj} = \omega^2 \} \\
\beta &= \# \{ k : Q_{ik} = \omega \text{ and } Q_{kj} = \omega \} \\
\gamma &= \# \{ k : Q_{ik} = \omega \text{ and } Q_{kj} = 1 \} \\
a &= \# \{ k : Q_{ik} = \omega^2 \text{ and } Q_{kj} = \omega^2 \} \\
b &= \# \{ k : Q_{ik} = \omega^2 \text{ and } Q_{kj} = \omega \} \\
c &= \# \{ k : Q_{ik} = \omega^2 \text{ and } Q_{kj} =1 \} \\
A &= \# \{ k : Q_{ik} = 1 \text{ and } Q_{kj} = \omega^2 \} \\
B &= \# \{ k : Q_{ik} = 1 \text{ and } Q_{kj} = \omega \} \\
C &= \# \{ k : Q_{ik} = 1 \text{ and } Q_{kj} = 1 \}.
\end{align*}
We shall now establish equations relating these nine values.  To begin, we see that the number of $\omega$'s in row $i$ is equal to $\alpha + \beta + \gamma+1$. (The $+1$ comes from the term $Q_{ij}=\omega$.)  Also, the number of $\omega$'s in row $i$ is equal to $e := \frac{n-\mu-2}{3}$ by Proposition~\ref{number-omega-row-col}.  Thus 
\begin{equation} \label{EQ-1}
\alpha + \beta + \gamma = e -1.
\end{equation}
In addition, the number of $\omega^2$'s in row $i$ is equal to $a+b+c$, and by Proposition~\ref{number-omega-row-col} the number of $\omega^2$'s in row $i$ is equal to $e$.  Thus
\begin{equation} \label{EQ-2}
a+b+c = e.
\end{equation}
Also, the number of $1$'s in row $i$ is equal to $A + B + C$, and by Proposition~\ref{number-omega-row-col} the number of $1$'s in row $i$ is equal to $e+\mu+1$.    Hence
\begin{equation} \label{EQ-3}
A+B+C = e+\mu +1.
\end{equation}
Next we turn our attention to the $j$\textsuperscript{th} column.  We see that the number of $\omega^2$'s in column $j$ is equal to $\alpha + a + A$, and by Proposition~\ref{number-omega-row-col} the number of $\omega^2$'s in column $j$ is equal to $e$.  Thus 
\begin{equation} \label{EQ-4}
\alpha + a + A = e.
\end{equation}
In addition, the number of $\omega$'s in column $j$ is equal to $\beta + b + B +1$. (The $+1$ comes from the term $Q_{ij}=\omega$.)  Also, by Proposition~\ref{number-omega-row-col} the number of $\omega$'s in column $j$ is equal to $e$.  Thus 
\begin{equation} \label{EQ-5}
\beta + b + B = e -1.
\end{equation}
Furthermore, the number of $1$'s in column $j$ is equal to $\gamma + c + C$, and by Proposition~\ref{number-omega-row-col} the number of $1$'s in column $j$ is equal to $e+\mu+1$.    Hence
\begin{equation} \label{EQ-6}
\gamma + c + C = e+\mu +1.
\end{equation}
Finally, since $Q^2 = (n-1)I + \mu Q$ we have that
\begin{align*}
\mu \omega &= \mu Q_{ij} = [(n-1)I + \mu Q]_{ij} = Q^2_{ij} = \sum_{k=1}^n Q_{ik} Q_{kj} \\
&= \alpha (\omega \omega^2) + \beta (\omega \omega) + \gamma (\omega 1) + a (\omega^2 \omega^2) + b (\omega^2 \omega) + c (\omega^2 1) \\
& \qquad \qquad \qquad \qquad \qquad \qquad \qquad \qquad \qquad + A (1 \omega^2) + B (1 \omega) + C (1)( 1) \\
&= \alpha 1  + \beta \omega^2 + \gamma \omega + a \omega + b 1 + c \omega^2 + A \omega^2 + B \omega + C 1 \\
&= (\alpha+ b + C)1 + (\gamma + a + B)\omega + (\beta + c + A)\omega^2
\end{align*}
so that $$(\alpha+ b + C)1 + (\gamma + a + B - \mu)\omega + (\beta + c + A)\omega^2 = 0.$$  It follows from Lemma~\ref{1-lemma} that $\alpha+ b + C = \gamma + a + B - \mu = (\beta + c + A)$ and thus
\begin{equation} \label{EQ-7}
\alpha - \gamma -a + b -B + C = - \mu
\end{equation}
and 
\begin{equation} \label{EQ-8}
\alpha - \beta + b -c -A +C= 0.
\end{equation}

By looking at \eqref{EQ-1}, \eqref{EQ-2}, \eqref{EQ-3}, \eqref{EQ-4}, \eqref{EQ-5}, \eqref{EQ-6}, \eqref{EQ-7}, and \eqref{EQ-8}, we have eight equations in the nine unknowns $\alpha, \beta, \gamma, a, b, c, A, B, C$.  These equations are not linearly independent: we see that  $\eqref{EQ-1}+\eqref{EQ-2}+\eqref{EQ-3}=\eqref{EQ-4}+\eqref{EQ-5}+\eqref{EQ-6}$.  However, this is the only relation, and when we row reduce this system we obtain the seven equations
\begin{align}
\alpha - B &= -\frac{2\mu}{3} - \frac{1}{3} \tag{Eq.~1} \\
\beta - C &= -\frac{2\mu}{3} - \frac{4}{3} \tag{Eq.~2} \\
\gamma + B + C &= \frac{n}{3} + \mu \tag{Eq.~3} \\
 a - C &= -\frac{\mu}{3} - \frac{2}{3} \tag{Eq.~4} \\
 b + B + C &= \frac{n}{3} + \frac{\mu}{3} - \frac{1}{3} \tag{Eq.~5} \\
 c - B &= -\frac{\mu}{3} + \frac{1}{3} \tag{Eq.~6} \\
 A + B + C &=  \frac{n}{3} + \frac{2\mu}{3} + \frac{1}{3} \tag{Eq.~7}  
\end{align}
with $B$ and $C$ as the two free variables.

It is important to note that the above variables, $\alpha, \beta, \gamma, a, b, c, A, B, C$, really should carry a subscript $(i,j)$ since their actual values could depend on the particular $(i,j)$ that we choose satisfying $Q_{i,j}= \omega$ and we are only asserting that for each such pair $(i,j)$ these equations must be met, not that their values are independent of the pair that we have chosen. 

Similarly, we derive equations for the case that $Q_{ij} = 1$ for some $2 \leq i,j \leq n$ with $i \neq j.$ Later, we shall prove that such an entry always exists.  For these values of $i$ and $j$ let 
\begin{align*}
\alpha^{\prime} &= \# \{ k : Q_{ik} = \omega \text{ and } Q_{kj} = \omega^2 \} \\
\beta^{\prime} &= \# \{ k : Q_{ik} = \omega \text{ and } Q_{kj} = \omega \} \\
\gamma^{\prime} &= \# \{ k : Q_{ik} = \omega \text{ and } Q_{kj} = 1 \} \\
a^{\prime} &= \# \{ k : Q_{ik} = \omega^2 \text{ and } Q_{kj} = \omega^2 \} \\
b^{\prime} &= \# \{ k : Q_{ik} = \omega^2 \text{ and } Q_{kj} = \omega \} \\
c^{\prime} &= \# \{ k : Q_{ik} = \omega^2 \text{ and } Q_{kj} =1 \} \\
A^{\prime} &= \# \{ k : Q_{ik} = 1 \text{ and } Q_{kj} = \omega^2 \} \\
B^{\prime} &= \# \{ k : Q_{ik} = 1 \text{ and } Q_{kj} = \omega \} \\
C^{\prime} &= \# \{ k : Q_{ik} = 1 \text{ and } Q_{kj} = 1 \}.
\end{align*}
We shall now establish equations relating these nine values.  To begin, we see that the number of $\omega$'s in row $i$ is equal to $\alpha^{\prime} + \beta^{\prime} + \gamma^{\prime}$.  Also, the number of $\omega$'s in row $i$ is equal to $e$ by Proposition~\ref{number-omega-row-col}.  Thus 
\begin{equation} \label{EQ-1'}
\alpha^{\prime} + \beta^{\prime} + \gamma^{\prime} = e.
\end{equation}

In addition, the number of $\omega^2$'s in row $i$ is equal to $a^{\prime}+b^{\prime}+c^{\prime}$, and by Proposition~\ref{number-omega-row-col} the number of $\omega^2$'s in row $i$ is equal to $e := \frac{n-\mu-2}{3}$.  Thus
\begin{equation} \label{EQ-2'}
a^{\prime}+b^{\prime}+c^{\prime} = e.
\end{equation}
Also, the number of $1$'s in row $i$ is equal to $A^{\prime} + B^{\prime} + C^{\prime}+1$.  (The +1 comes from the $(i,j)$-entry.)  By Proposition~\ref{number-omega-row-col} the number of $1$'s in row $i$ is equal to $e+\mu+1$.    Hence
\begin{equation} \label{EQ-3'}
A^{\prime}+B^{\prime}+C^{\prime} = e+\mu.
\end{equation}

Next we turn our attention to the $j$\textsuperscript{th} column.  We see that the number of $\omega^2$'s in column $j$ is equal to $\alpha^{\prime} + a^{\prime} + A^{\prime}$, and by Proposition~\ref{number-omega-row-col} the number of $\omega^2$'s in column $j$ is equal to $e$.  Thus 
\begin{equation} \label{EQ-4'}
\alpha^{\prime} + a^{\prime} + A^{\prime} = e.
\end{equation}
In addition, the number of $\omega$'s in column $j$ is equal to $\beta^{\prime} + b^{\prime} + B^{\prime}$.  Also, by Proposition~\ref{number-omega-row-col} the number of $\omega$'s in column $j$ is equal to $e$.  Thus 
\begin{equation} \label{EQ-5'}
\beta^{\prime} + b^{\prime} + B^{\prime} = e.
\end{equation}

Furthermore, the number of $1$'s in column $j$ is equal to $\gamma^{\prime} + c^{\prime} + C^{\prime} + 1$, and by Proposition~\ref{number-omega-row-col} the number of $1$'s in column $j$ is equal to $e+\mu+1$.    Hence
\begin{equation} \label{EQ-6'}
\gamma^{\prime} + c^{\prime} + C^{\prime} = e+\mu.
\end{equation}

Finally, since $Q^2 = (n-1)I + \mu Q$ we have that
\begin{align*}
\mu  &= \mu Q_{ij} = [(n-1)I + \mu Q]_{ij} = Q^2_{ij} = \sum_{k=1}^n Q_{ik} Q_{kj} \\
&= \alpha^{\prime} (\omega \omega^2) + \beta^{\prime} (\omega \omega) + \gamma^{\prime} (\omega 1) + a^{\prime} (\omega^2 \omega^2) + b^{\prime} (\omega^2 \omega) + c^{\prime} (\omega^2 1) \\
& \qquad \qquad \qquad \qquad \qquad \qquad \qquad \qquad \qquad + A^{\prime} (1 \omega^2) + B^{\prime} (1 \omega) + C^{\prime} (1 \, 1) \\
&= \alpha^{\prime} 1  + \beta^{\prime} \omega^2 + \gamma^{\prime} \omega + a^{\prime} \omega + b^{\prime} 1 + c^{\prime} \omega^2 + A^{\prime} \omega^2 + B^{\prime} \omega + C^{\prime} 1 \\
&= (\alpha^{\prime}+ b^{\prime} + C^{\prime})1 + (\gamma^{\prime} + a^{\prime} + B^{\prime})\omega + (\beta^{\prime} + c^{\prime} + A^{\prime})\omega^2
\end{align*}
so that $$(\alpha^{\prime} + b^{\prime} + C^{\prime} - \mu)1 + (\gamma^{\prime} + a^{\prime} + B^{\prime})\omega + (\beta^{\prime} + c^{\prime} + A^{\prime})\omega^2 = 0.$$  It follows from Lemma~\ref{1-lemma} that $\alpha^{\prime}+ b^{\prime} + C^{\prime} - \mu = \gamma^{\prime} + a^{\prime} + B^{\prime} = \beta^{\prime} + c^{\prime} + A^{\prime}$ and thus
\begin{equation} \label{EQ-7'}
\alpha^{\prime} - \gamma^{\prime} -a^{\prime} + b^{\prime} -B^{\prime} + C^{\prime} = + \mu
\end{equation}
and 
\begin{equation} \label{EQ-8'}
\alpha^{\prime} - \beta^{\prime} + b^{\prime} -c^{\prime} -A^{\prime} +C^{\prime}= + \mu.
\end{equation}

When we row reduce this system we obtain the following equations
\begin{align}
\alpha^{\prime}= B^{\prime}  \tag{Eq.~8} \\
\beta^{\prime} = C^{\prime} - \mu \tag{Eq.~9} \\
\gamma^{\prime} = e + \mu - B^{\prime} - C^{\prime} \tag{Eq.~10} \\
a^{\prime} =  C^{\prime} - \mu \tag{Eq.~11} \\
b^{\prime} = e + \mu - B^{\prime} - C^{\prime} \tag{Eq.~12} \\
c^{\prime} = B^{\prime} \tag{Eq.~13} \\
A^{\prime} = e + \mu - B^{\prime} - C^{\prime} \tag{Eq.~14} 
\end{align}
with $B^{\prime}$ and $C^{\prime}$ as the two free variables.
Note that it follows from these equations that, $\alpha^{\prime} = c^{\prime} = B^{\prime}, \beta^{\prime} = a^{\prime} = C^{\prime} - \mu$ and $\gamma^{\prime} = A^{\prime} = b^{\prime} = e + \mu - B^{\prime} - C^{\prime}.$

The above equations are necessary and sufficient to characterize nontrivial cube root signature matrices of equiangular $(n,k)$-frames.

\begin{theorem} \label{nasc}  Let $Q$ be a self-adjoint $n \times n$
  matrix with $Q_{ii}=0, Q_{i1} = Q_{1i}=1$ for all $i$ and $Q_{ij}$ a
  cube root of unity for all $i \neq j, 2 \le i,j \le n$ with at least
  one entry that is not equal to $1.$  Then $Q^2 = (n-1)I + \mu Q$ if and only if for each pair $i \neq j$ such that $Q_{i,j}= \omega,$ conditions Eq.~1--Eq.~7 are satisfied and for each pair $i \neq j, 2 \le i,j \le n$ such that $Q_{i,j} =1,$ conditions Eq.~8--Eq.~14 are satisfied, where $3e+ \mu +2 =n.$
\end{theorem}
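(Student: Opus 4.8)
The plan is to verify the identity $Q^2 = (n-1)I + \mu Q$ entrywise, exploiting that both sides are self-adjoint. The diagonal needs no hypothesis: since $Q_{ii}=0$ and every off-diagonal entry is unimodular, $(Q^2)_{ii} = \sum_k |Q_{ik}|^2 = n-1 = [(n-1)I+\mu Q]_{ii}$. Because $Q^2$ and $(n-1)I+\mu Q$ are self-adjoint and $\mu$ is real, the identity at $(i,j)$ is equivalent to its conjugate at $(j,i)$; as $Q_{ij}=\omega^2$ exactly when $Q_{ji}=\omega$, it suffices to handle the entries with $Q_{ij}\in\{1,\omega\}$ together with the first row and column. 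For the forward (``only if'') implication I would simply run the computation of Section~\ref{Eqs-sec} as already presented: assuming $Q^2=(n-1)I+\mu Q$, Proposition~\ref{number-omega-row-col} supplies the row and column counts and shows $e=\tfrac{n-\mu-2}{3}$ is an integer, i.e.\ $3e+\mu+2=n$; then the expansion of $Q^2_{ij}$ at each $\omega$-entry, combined with Lemma~\ref{1-lemma}, yields \eqref{EQ-1}--\eqref{EQ-8} and hence the row reduction Eq.~1--Eq.~7, while the analogous computation at each $1$-entry gives Eq.~8--Eq.~14.

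For the reverse (``if'') implication the key observation is that every step just described is reversible. Fix a pair $(i,j)$ with $2\le i,j\le n$, $i\neq j$, and $Q_{ij}=\omega$. The quantities $\alpha,\dots,C$ are defined purely combinatorially from $Q$, and Eq.~1--Eq.~7 have the same solution set as their pre-image \eqref{EQ-1}--\eqref{EQ-8}; hence the hypothesis that the actual counts satisfy Eq.~1--Eq.~7 forces \eqref{EQ-7} and \eqref{EQ-8} to hold. These say exactly that the coefficients of $1,\omega,\omega^2$ in $Q^2_{ij}-\mu\omega$ coincide, and since $1+\omega+\omega^2=0$ this gives $Q^2_{ij}=\mu\omega=\mu Q_{ij}$. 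The identical argument at each $1$-entry (using Eq.~8--Eq.~14 together with \eqref{EQ-7'} and \eqref{EQ-8'}) gives $Q^2_{ij}=\mu=\mu Q_{ij}$, and the $\omega^2$-entries then follow by conjugation. This settles every entry with $2\le i,j\le n$.

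It remains to treat the first row and column, which is where the genuine work lies. For $i\ge 2$ one has $(Q^2)_{i1}=\sum_{k\ge2}Q_{ik}=(\text{row }i\text{ sum})-1$, so $(Q^2)_{i1}=\mu$, and by self-adjointness $(Q^2)_{1i}=\mu$, provided row $i$ contains exactly $e$ entries equal to $\omega$, $e$ equal to $\omega^2$, and $e+\mu+1$ equal to $1$. The plan is to read these counts off the hypotheses. Indeed, Eq.~1--Eq.~7 for an $\omega$-entry $(i,j)$ contain, through \eqref{EQ-1}--\eqref{EQ-3}, precisely the count for row $i$, and Eq.~8--Eq.~14 for a $1$-entry $(i,j)$ with $2\le i,j\le n$ contain it through \eqref{EQ-1'}--\eqref{EQ-3'}. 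Thus any row $i\ge 2$ that has an $\omega$ or a $1$ off the diagonal in columns $2,\dots,n$ automatically carries the correct counts.

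The main obstacle is the one remaining possibility: a row $i\ge 2$ all of whose off-diagonal entries in columns $2,\dots,n$ equal $\omega^2$. I would exclude this for $n\ge 3$ as follows. Self-adjointness makes column $i$ consist of $n-2$ copies of $\omega$ and the single $1$ in the first row, so one may choose an $\omega$-entry $(k_0,i)$ in that column and invoke Eq.~1--Eq.~7 for the pair $(k_0,i)$, whose equations \eqref{EQ-4} and \eqref{EQ-5} record the $\omega^2$- and $\omega$-counts of column $i$. These would force $e=0$ and $e=n-2$ simultaneously, hence $n=2$. Since a nontrivial cube root Seidel matrix must have $n\ge 3$, this case cannot occur, so every row (and, by symmetry, every column) has the correct counts, and the first row and column of the identity follow. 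I expect the verification that Eq.~1--Eq.~7 and Eq.~8--Eq.~14 really are the row reductions of \eqref{EQ-1}--\eqref{EQ-8} and its primed analogue to be routine linear algebra, and the exclusion of the all-$\omega^2$ row to be the single non-mechanical point.
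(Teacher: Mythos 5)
Your proposal is correct and takes essentially the same route as the paper's proof: entrywise comparison of $Q^2$ with $(n-1)I + \mu Q$, with the diagonal handled by unimodularity, the $\omega^2$-entries by self-adjointness, the $\omega$- and $1$-entries via the equivalence of Eq.~1--Eq.~7 and Eq.~8--Eq.~14 with the systems \eqref{EQ-1}--\eqref{EQ-8} and their primed analogues (row reduction preserving the solution set, given $3e+\mu+2=n$), and the first row and column via the row and column counts from the proof of Proposition~\ref{number-omega-row-col}. The only place you go beyond the paper's terser argument is in the converse direction, where you explicitly verify that every row $i\ge 2$ actually carries the counts $e,e,e+\mu+1$ by locating an $\omega$- or off-diagonal $1$-entry in that row and excluding the degenerate all-$\omega^2$ row (via \eqref{EQ-4} and \eqref{EQ-5} at an $\omega$-entry of the corresponding column, forcing $e=0$ and $e=n-2$); this fills in a step the paper's proof passes over silently, and your treatment of it is sound.
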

\begin{proof}  We have that $Q^2= (n-1)I + \mu Q$ if and only if for every $(i,j)$ the corresponding entries are equal. Equality of the $(i,i)$-th entries follows from the fact that the off-diagonal entries are  all of modulus one. 

The proof of Proposition~\ref{number-omega-row-col} shows that the relationships between $e, \mu$ and $n$ are equivalent to the condition that
the $(i,1)$-th entries and the $(1,i)$-th entries of $Q^2$ are equal to $\mu$ which are the corresponding $(i,1)$-entries and $(1,i)$-entries of $(n-1)I + \mu Q.$

The equations, Eq.~1--Eq.~7 are equivalent to requiring that if $Q_{i,j}= \omega,$ then the $(i,j)$-th entry of $Q^2$ is equal to $\mu \omega$ which is the $(i,j)$-th entry of $(n-1)I + \mu Q.$

If $Q_{i,j} = \omega^2,$ then $Q_{j,i} = \omega$ and by the last argument we have that the $(j,i)$-th entry of $Q^2$ is equal to the $(j,i)$-th entry of $(n-1)I + \mu Q.$  But since both of these matrices are self-adjoint, we have that equality of their $(j,i)$-th entry implies equality of their $(i,j)$-th entry.

Finally, if $Q_{i,j} =1, i \neq j, 2 \le i,j \le n,$ then the equations, Eq.~8--Eq.~14, are equivalent to the equality of the $(i,j)$-th entries of $Q^2$ and $(n-1)I + \mu Q.$
\end{proof}

\begin{remark}
It is important to realize that all of our results apply only to nontrivial cube root signature matrices, because
we assume that the first row contains all $1$'s except for one $0$, and the second row contains at least one $\omega$. 
For example, one can check that $Q = \left( \begin{smallmatrix} 0 & \omega & \omega^2 \\ \omega^2 & 0 & \omega \\ \omega & \omega^2 & 0 \end{smallmatrix} \right)$ is a $3 \times 3$ cube root signature matrix that satisfies $Q^2 = 2I + Q$.  However, conjugating $Q$ by the unitary matrix $\left( \begin{smallmatrix} 1 & 0 & 0 \\ 0 & \omega & 0 \\ 0 & 0 & \omega^2 \end{smallmatrix} \right)$ shows that $Q$ has standard form $\left( \begin{smallmatrix} 0 & 1 & 1 \\ 1 & 0 & 1 \\ 1 & 1 & 0 \end{smallmatrix} \right)$, and therefore $Q$ is trivial.  
Thus $e=0$ and the starting point (\ref{EQ-1}) for deriving our conditions does not  hold.
\end{remark}

Because $\alpha, \beta, \gamma$, $a, b, c$, $A, B, C$, $\alpha^{\prime}, \beta^{\prime}, \gamma^{\prime}$, $a^{\prime}, b^{\prime},c^{\prime}$, $A^{\prime}, B^{\prime}$ and $C^{\prime}$ are all non-negative integers, the  above equations have a number of consequences for the values $n$ and $\mu$.  We now state some results exploring these consequences. Remarkably, these consequences all  seem to stem from the unprimed equations.

\begin{proposition} \label{parameter-conds-prop}
Let $Q$ be a nontrivial cube root signature matrix of an equiangular $(n,k)$-frame, 
satisfying $Q^2 = (n-1)I + \mu Q$.  Then the following hold:
\begin{enumerate}
\item[(a)] The value $\mu$ is an integer and $\mu \equiv 1 \ (\textnormal{mod} \ 3)$.
\item[(b)] The integer $n$ satisfies $n \equiv 0 \ (\textnormal{mod} \ 3)$.
\item[(c)] If $\lambda_1 < 0 < \lambda_2$ are the eigenvalues of $Q$, then $\lambda_1$ and $\lambda_2$ are integers with $\lambda_1 \equiv 2 \ (\textnormal{mod} \ 3)$ and $\lambda_2 \equiv 2 \ (\textnormal{mod} \ 3)$.
\item[(d)] The integer $4(n-1) + \mu^2$ is a perfect square and in addition we have $4(n-1) + \mu^2 \equiv 0 \ (\textnormal{mod} \ 9)$.
\end{enumerate}
\end{proposition}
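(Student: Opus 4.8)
The plan is to read parts (a) and (b) off the counting data already in hand, and then to analyze the integer quadratic satisfied by the two eigenvalues to obtain (c) and (d).

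For part (a), I would first note that $\mu$ is automatically an integer: Proposition~\ref{number-omega-row-col} gives that $e = \frac{n-\mu-2}{3}$ is an integer, and since $n$ is an integer this forces $\mu = n - 2 - 3e \in \Z$. To pin down $\mu$ modulo $3$ I would invoke nontriviality: since $Q$ is nontrivial there is a pair $(i,j)$ with $Q_{ij} = \omega$, so the unprimed equations Eq.~1--Eq.~7 hold with honest non-negative integer values. Equation Eq.~4 reads $a - C = -\tfrac{\mu+2}{3}$, and as its left-hand side is an integer we get $\mu + 2 \equiv 0 \pmod 3$, i.e. $\mu \equiv 1 \pmod 3$. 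For part (b), I would combine $\mu = n - 2 - 3e$ with $\mu \equiv 1 \pmod 3$ to obtain $n \equiv \mu + 2 \equiv 0 \pmod 3$; equivalently, once $\mu$ is known to be an integer, Eq.~3 shows directly that $n/3 \in \Z$.

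For parts (c) and (d), I would use that by Theorem~\ref{2-uniform-char} the relation $Q^2 = (n-1)I + \mu Q$ holds, so each eigenvalue $\lambda$ satisfies $\lambda^2 - \mu \lambda - (n-1) = 0$; in particular $\lambda_1 + \lambda_2 = \mu$ and $\lambda_1 \lambda_2 = 1 - n$, matching the relations in \eqref{param-eqs}. Since $\mu, n \in \Z$, both $\lambda_1$ and $\lambda_2$ are algebraic integers equal to $\frac{\mu \mp \sqrt{D}}{2}$ with $D := \mu^2 + 4(n-1)$. The heart of the argument, and the step I expect to be the main obstacle, is showing $D$ is a perfect square, equivalently that the $\lambda_i$ are rational. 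Here I would use that $Q$ has trace zero while $\lambda_1$ and $\lambda_2$ occur with multiplicities $n-k$ and $k$, so $(n-k)\lambda_1 + k\lambda_2 = 0$. Substituting $\lambda_{1,2} = \frac{\mu \mp \sqrt D}{2}$ yields $n\mu + (2k-n)\sqrt D = 0$; were $\sqrt D$ irrational this would force $2k-n = 0$ and hence $n\mu = 0$, impossible since $n \geq 1$ and $\mu \equiv 1 \pmod 3$ gives $\mu \neq 0$. Thus $D$ is a perfect square, the first claim of (d), and the $\lambda_i$, being rational algebraic integers, are integers.

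It then remains to compute residues. With $n \equiv 0 \pmod 3$ from part (b), I have $\lambda_1 + \lambda_2 = \mu \equiv 1$ and $\lambda_1 \lambda_2 = 1 - n \equiv 1 \pmod 3$; the only residue pair in $\Z/3\Z$ with sum $1$ and product $1$ is $(2,2)$, since $x^2 - x + 1$ has the double root $2$ over $\mathbb{F}_3$, so $\lambda_1 \equiv \lambda_2 \equiv 2 \pmod 3$, giving (c). Finally $D = \mu^2 + 4(n-1) = (\lambda_1 - \lambda_2)^2$ with $\lambda_1 - \lambda_2 \equiv 0 \pmod 3$, whence $D \equiv 0 \pmod 9$, completing (d). A cleaner variant of the rationality step, if preferred, observes that the characteristic polynomial of $Q$ has coefficients in $\Z[\omega] \cap \R = \Z$, so an irreducible quadratic minimal polynomial would force equal multiplicities $n-k = k$ and again $\mu = 0$; I would nonetheless keep the trace computation as the primary route, as it is entirely self-contained.
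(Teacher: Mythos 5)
Your proof is correct, and for parts (a) and (b) it is essentially the paper's argument with cosmetic substitutions: the paper obtains integrality of $\mu$ from Eq.~7 together with the integrality of $e$, and the residue from Eq.~1 (which gives $2\mu \equiv 2 \pmod 3$), while you get integrality directly from $e = \frac{n-\mu-2}{3} \in \Z$ (Proposition~\ref{number-omega-row-col}) and the residue from Eq.~4; both arguments then read (b) off the relation $n = 3e + \mu + 2$ (the paper uses Eq.~3). The genuine divergence is the rationality step underlying (c) and (d). The paper imports the closed-form parameter relations \eqref{param-eqs} from \cite{HP04}, namely $\mu = (n-2k)\sqrt{(n-1)/(k(n-k))}$ and $k = \frac{n}{2} - \frac{\mu n}{2\sqrt{4(n-1)+\mu^2}}$, and divides through to get $\sqrt{4(n-1)+\mu^2} = \mu n/(n-2k) \in \Q$, tacitly assuming $n \neq 2k$. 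Your trace identity $(n-k)\lambda_1 + k\lambda_2 = 0$, rewritten as $n\mu + (2k-n)\sqrt{D} = 0$ with $D = \mu^2 + 4(n-1)$, is the same relation rederived self-containedly, and you make the degenerate case explicit: $n = 2k$ would force $n\mu = 0$, impossible since $\mu \equiv 1 \pmod 3$ rules out $\mu = 0$ --- a point the paper's division leaves implicit. Your closing steps are equivalent in content to the paper's: ``rational algebraic integers are integers'' replaces the explicit appeal to the Rational Root Theorem, and $D = (\lambda_1 - \lambda_2)^2 \equiv 0 \pmod 9$ via part (c) replaces the paper's direct computation that $m^2 \equiv 0 \pmod 3$ and $3$ prime imply $9 \mid m^2$. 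Two small points to tighten: like the paper, you implicitly pass to standard form via Lemma~\ref{permutation-lemma} so that Eq.~1--Eq.~7 are available (switching preserves $n$, $\mu$, the spectrum, and nontriviality); and your multiplicity claim --- $\lambda_1$ with multiplicity $n-k$ and $\lambda_2$ with multiplicity $k$ --- deserves one line of justification, e.g.\ from $VV^* = (k/n)I_n + c_{n,k}Q$ being a rank-$k$ projection. Alternatively, observe that your argument needs only that the multiplicities $m_1, m_2$ are positive with $m_1 + m_2 = n$: then $m_1\lambda_1 + m_2\lambda_2 = 0$ becomes $n\mu + (m_2 - m_1)\sqrt{D} = 0$, and irrationality of $\sqrt{D}$ would force $m_1 = m_2$ and again $n\mu = 0$, so the exact multiplicities are never needed.
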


\begin{proof}
For (a) note that Eq.~7 shows that $A + B + C = (n + 2 \mu +1)/3$ and hence $A + B + C = e + \mu + 1$, with $e =  \frac{n-\mu-2}{3}$.  Since $e$ is an integer by Proposition~\ref{number-omega-row-col}, and since $A$, $B$, and $C$ are integers, it follows that $\mu$ is an integer.  In addition Eq.~1 shows that $2 \mu = -3 (\alpha - B) - 1$ so that $2 \mu \equiv 2 \ (\textnormal{mod} \ 3)$ and $\mu \equiv 1 \ (\textnormal{mod} \ 3)$.

For (b) we see that Eq.~3 implies $n = 3 (\gamma + B + C - \mu)$.  Since $\gamma$, $B$, and $C$ are integers, and since $\mu$ is an integer by (a), we have that $n  \equiv 0 \ (\textnormal{mod} \ 3)$.

For (c) we use \eqref{param-eqs} and (a) to see that $(n-2k) \sqrt{\frac{n-1}{k(n-k)}} = \mu \in \Z$, and thus $ \sqrt{\frac{n-1}{k(n-k)}}  \in \Q$ and $\lambda_1 = -\sqrt{\frac{k(n-1)}{(n-k)}} = - k^2 \sqrt{\frac{n-1}{k(n-k)}} \in \Q$.  In addition, using \eqref{param-eqs} we have that $\lambda_2 = (1-n)/\lambda_1 \in \Q$.  Thus $\lambda_1$ and $\lambda_2$ are both rational.  Because $Q$ satisfies $Q^2 = (n-1)I + \mu Q$, the polynomial $p(x) = x^2 - \mu x - (n-1)$ annihilates $Q$, and hence the minimal polynomial of $Q$ divides $p(x)$ and the eigenvalues $\lambda_1$ and $\lambda_2$ are rational roots of $p(x)$.  Since the coefficients of $p(x)$ are integers and the leading coefficient of $p(x)$ is $1$, the Rational Root Theorem tells us that the only rational roots of $p(x)$ are integers.  Hence $\lambda_1$ and $\lambda_2$ are integers.  Finally, using \eqref{param-eqs} we have that $\lambda_1 + \lambda_2 = \mu$, so by (a) we have 
\begin{equation} \label{lambda-eq-1}
\lambda_1 + \lambda_2 \equiv 1 \ (\textnormal{mod} \ 3).
\end{equation}
Also, using \eqref{param-eqs} we have that $\lambda_1 \lambda_2 = 1-n$, so by (b) we have 
\begin{equation} \label{lambda-eq-2}
\lambda_1 \lambda_2 \equiv 1 \ (\textnormal{mod} \ 3).
\end{equation}
There are only three possibilities for the residue of an integer modulo 3 (namely 0, 1, or 2) and a consideration of cases shows that the only situation in which \eqref{lambda-eq-1} and \eqref{lambda-eq-2} are both satisfied is when $\lambda_1 \equiv 2 \ (\textnormal{mod} \ 3)$ and $\lambda_2 \equiv 2 \ (\textnormal{mod} \ 3)$.

For (d) we use \eqref{param-eqs} to write $k = \frac{n}{2} - \frac{\mu n}{2 \sqrt{4(n-1) + \mu^2}}$.  Thus $ \sqrt{4(n-1) + \mu^2} = \frac{\mu n }{n-2k} \in \Q$ by (a).  Since $n$ and $\mu$ are integers, we have that $4(n-1) + \mu^2$ is an integer, and $\sqrt{4(n-1) + \mu^2}$ is rational if and only if $\sqrt{4(n-1) + \mu^2}$ is an integer.  Thus  $\sqrt{4(n-1) + \mu^2} = m$ for some $m \in \Z$ and $4(n-1) + \mu^2 = m^2$, so that $4(n-1) + \mu^2$ is a perfect square.  Furthermore, since $4(n-1) + \mu^2 = m^2$ and we have $\mu \equiv 1 \ (\textnormal{mod} \ 3)$ by (a) and $n \equiv 0 \ (\textnormal{mod} \ 3)$ by (b), it follows that $m^2 \equiv 0 \ (\textnormal{mod} \ 3)$.  Thus $3$ divides $m^2$, and since $3$ is prime, we have that $3$ divides $m$.  Hence $9$ divides $m^2 = 4(n-1) + \mu^2$ and $4(n-1) + \mu^2 \equiv 0 \ (\textnormal{mod} \ 9)$.
\end{proof}

Proposition~\ref{parameter-conds-prop} shows that $\mu \equiv 1 \ (\textnormal{mod} \ 3)$ and $n \equiv 0 \ (\textnormal{mod} \ 3)$.  However, we can do slightly better than this, as the following proposition shows.

\begin{proposition} \label{mod9-prop}
Let $Q$ be a nontrivial cube root signature matrix of an equiangular $(n,k)$-frame, satisfying $Q^2 = (n-1)I + \mu Q$.  Then one of the following three cases must hold:
\begin{itemize}
\item $n \equiv 0 \ (\textnormal{mod} \ 9)$ and $\mu \equiv 7 \ (\textnormal{mod} \ 9)$, or 
\item $n \equiv 3 \ (\textnormal{mod} \ 9)$ and $\mu \equiv 1 \ (\textnormal{mod} \ 9)$, or 
\item $n \equiv 6 \ (\textnormal{mod} \ 9)$ and $\mu \equiv 4 \ (\textnormal{mod} \ 9)$.
\end{itemize}
\end{proposition}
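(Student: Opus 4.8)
The plan is to leverage part (c) of Proposition~\ref{parameter-conds-prop}, which already supplies the arithmetic backbone: the eigenvalues $\lambda_1, \lambda_2$ are integers satisfying $\lambda_1 \equiv \lambda_2 \equiv 2 \pmod 3$, and by \eqref{param-eqs} they are tied to our parameters through $\mu = \lambda_1 + \lambda_2$ and $n = 1 - \lambda_1 \lambda_2$. The key observation is that $n$ and $\mu$ are completely determined modulo $9$ by the residues of $\lambda_1$ and $\lambda_2$, and the constraint $\lambda_i \equiv 2 \pmod 3$ cuts these down to a single one-parameter family, which is exactly what produces the three alternatives.

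Concretely, since $2 \equiv -1 \pmod 3$, I would write $\lambda_1 = 3p - 1$ and $\lambda_2 = 3q - 1$ for integers $p, q$, and substitute into the two relations above. This gives
\[
\mu = \lambda_1 + \lambda_2 = 3(p+q) - 2, \qquad n = 1 - \lambda_1 \lambda_2 = 3(p+q) - 9pq,
\]
where the formula for $n$ uses $(3p-1)(3q-1) = 9pq - 3(p+q) + 1$. The one point to note is that the quadratic cross term is a multiple of $9$, so it drops out modulo $9$. Setting $s := p + q$, I obtain the clean congruences $n \equiv 3s \pmod 9$ and $\mu \equiv 3s - 2 \pmod 9$.

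To finish, I would split according to the residue of $s$ modulo $3$. When $s \equiv 0, 1, 2 \pmod 3$ we get $3s \equiv 0, 3, 6 \pmod 9$ respectively, and the corresponding values of $\mu$ are $\mu \equiv 7, 1, 4 \pmod 9$. These are precisely the three pairs $(n, \mu) \equiv (0,7),\ (3,1),\ (6,4) \pmod 9$ asserted in the statement, so every admissible choice of $(p,q)$ lands in one of the three cases.

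I do not expect any genuine obstacle here: once Proposition~\ref{parameter-conds-prop}(c) is in hand, the whole argument is a short modular computation. The only places to stay vigilant are the reduction of the product term $9pq$ modulo $9$ and the bookkeeping that matches each residue class of $s$ to the correct pair $(n \bmod 9, \mu \bmod 9)$.
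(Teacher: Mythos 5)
Your proposal is correct, but it takes a genuinely different route from the paper's. The paper deduces the result from parts (a), (b), and (d) of Proposition~\ref{parameter-conds-prop}: since $n \equiv 0 \pmod{3}$ and $\mu \equiv 1 \pmod{3}$, the pair $(n \bmod 9, \mu \bmod 9)$ has nine possible values, and a direct finite check shows that the condition $4(n-1)+\mu^2 \equiv 0 \pmod 9$ from part (d) eliminates all but the three listed combinations. You instead invoke part (c), writing $\lambda_1 = 3p-1$, $\lambda_2 = 3q-1$ and using the relations $\mu = \lambda_1+\lambda_2$ and $n = 1-\lambda_1\lambda_2$ from \eqref{param-eqs} to get $\mu = 3s-2$ and $n = 3s - 9pq$ with $s = p+q$, so that $n \equiv 3s$ and $\mu \equiv 3s-2 \pmod 9$; the three cases are exactly the three residues of $s$ modulo $3$, and your bookkeeping of the correspondence $(0,7)$, $(3,1)$, $(6,4)$ is accurate. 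Each approach buys something: the paper's argument is a mechanical verification that never needs the eigenvalue integrality of part (c), only the coarser facts (a), (b), (d); your argument explains structurally \emph{why} the residues of $n$ and $\mu$ are locked together --- both are functions of the single parameter $s \bmod 3$ --- and it even recovers part (d) in passing, since $4(n-1)+\mu^2 = (\lambda_1-\lambda_2)^2 = 9(p-q)^2$, which is manifestly a perfect square divisible by $9$. The only logical point worth making explicit is that your proof rests on Proposition~\ref{parameter-conds-prop}(c), which in the paper is itself derived from (a), (b), and the Rational Root Theorem, so your route is not shorter overall, but it is self-contained given that proposition and arguably more illuminating.
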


\begin{proof}
From Proposition~\ref{parameter-conds-prop}(b), we have that $n \equiv 0 \ (\textnormal{mod} \ 3)$ so that $n$ is congruent to $0$, $3$, or $6$ modulo $9$.  Also, from Proposition~\ref{parameter-conds-prop}(a), we have that $\mu \equiv 0 \ (\textnormal{mod} \ 3)$ so that $\mu$ is congruent to $1$, $4$, or $7$ modulo $9$.  By Proposition~\ref{parameter-conds-prop}(d) we have $4(n-1) + \mu^2 \equiv 0 \ (\textnormal{mod} \ 9)$ and by considering the possible values of $n$ and $\mu$ modulo $9$, we see the only way this equation is satisfied is if one of the three cases in the statement of this proposition holds.
\end{proof}

\begin{proposition} \label{e-mod3-prop}
Let $Q$ be a nontrivial cube root signature matrix of an equiangular $(n,k)$-frame, satisfying $Q^2 = (n-1)I + \mu Q$.  If we set $e := \frac{n- \mu -2}{3}$, then $e$ is an integer with $$e \equiv 0 \ (\textnormal{mod} \ 3)$$  and $e$ satisfies $$\frac{2n}{9} \leq e \leq \frac{4n-9}{9}.$$
\end{proposition}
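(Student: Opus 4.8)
The plan is to set the combinatorial variables aside and work with the two eigenvalues $\lambda_1 < 0 < \lambda_2$, using the parametrization in \eqref{param-eqs}. Substituting $\mu = \lambda_1 + \lambda_2$ and $n = 1 - \lambda_1\lambda_2$ into $e = \frac{n-\mu-2}{3}$ and factoring, I would first record the identity $e = -\frac{(1+\lambda_1)(1+\lambda_2)}{3}$. By Proposition~\ref{parameter-conds-prop}(c) both $\lambda_1$ and $\lambda_2$ are integers congruent to $2$ modulo $3$, so $1+\lambda_1$ and $1+\lambda_2$ are each divisible by $3$; hence their product is divisible by $9$ and $e \equiv 0 \pmod 3$ (this also re-establishes that $e$ is an integer, as already guaranteed by Proposition~\ref{number-omega-row-col}). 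Nontriviality forces $e \ge 1$, and since $1+\lambda_2 \ge 3 > 0$ this requires $1 + \lambda_1 < 0$; together with $\lambda_1 \equiv 2 \pmod 3$ and $\lambda_1 < 0$ this yields $\lambda_1 \le -4$, while $\lambda_2 \equiv 2 \pmod 3$ and $\lambda_2 > 0$ give $\lambda_2 \ge 2$.

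For the two inequalities I would clear denominators and factor. A direct computation shows that $e \ge \frac{2n}{9}$ is equivalent to $(\lambda_1 + 3)(\lambda_2 + 3) \le 4$ and that $e \le \frac{4n-9}{9}$ is equivalent to $(\lambda_1 - 3)(\lambda_2 - 3) \le 7$. The lower bound is then immediate: since $\lambda_1 + 3 \le -1$ and $\lambda_2 + 3 \ge 5$, the product $(\lambda_1+3)(\lambda_2+3)$ is at most $-5 < 4$. For the upper bound, if $\lambda_2 \ge 5$ then $\lambda_2 - 3 \ge 2 > 0$ while $\lambda_1 - 3 \le -7 < 0$, so the product is negative and the inequality holds with room to spare.

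The one remaining case, and the crux of the argument, is $\lambda_2 = 2$, where $\lambda_2 - 3 = -1$ and the inequality reads $3 - \lambda_1 \le 7$, i.e.\ $\lambda_1 = -4$. This does \emph{not} follow from the congruence and perfect-square conditions alone; indeed $(\lambda_1,\lambda_2) = (-7,2)$ passes every test in Propositions~\ref{parameter-conds-prop} and \ref{mod9-prop}. Here I would invoke that the frame dimension $k$ is a positive integer. Writing $k = \frac{\lambda_1^2 n}{n - 1 + \lambda_1^2}$ from \eqref{param-eqs} and substituting $\lambda_2 = 2$, $n = 1 - 2\lambda_1$, the expression simplifies, after polynomial division in $p := -\lambda_1$, to $k = 2p - 3 + \frac{6}{p+2}$. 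Integrality of $k$ forces $(p+2) \mid 6$, and since $\lambda_1 \le -4$ gives $p \ge 4$, hence $p+2 \ge 6$, the only possibility is $p + 2 = 6$, i.e.\ $\lambda_1 = -4$. At this boundary $(\lambda_1 - 3)(\lambda_2 - 3) = 7$, so the upper bound holds, with equality realized at $(n,\mu) = (9,-2)$.

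I expect the $\lambda_2 = 2$ case to be the main obstacle, precisely because it is the one place where the real-relaxed inequalities fail and an arithmetic input beyond the mod-$9$ data, namely integrality of $k$, is genuinely needed. The lower bound could alternatively be obtained purely combinatorially from the non-negativity of $\alpha$, $\beta$, and $b$: Eq.~1, Eq.~2, and Eq.~5 give $B \ge \frac{2\mu+1}{3}$, $C \ge \frac{2\mu+4}{3}$, and $B + C \le \frac{n+\mu-1}{3}$, whence $n \ge 3\mu + 6$, which rearranges to $e \ge \frac{2n}{9}$. The eigenvalue approach, however, delivers both bounds and the congruence in a single uniform computation, so I would present it that way.
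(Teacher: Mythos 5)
Your proof is correct, but it takes a genuinely different route from the paper's. The paper argues combinatorially from the count equations of Section~\ref{Eqs-sec}: the congruence $e \equiv 0 \ (\textnormal{mod}\ 3)$ comes from Proposition~\ref{mod9-prop}; the lower bound from nonnegativity of $\alpha$, $\beta$, $b$ via Eq.~1, Eq.~2, Eq.~5 (exactly the alternative you sketch in your closing paragraph); and the upper bound from Eq.~3 together with $C \geq 1$, which is free because in standard form the index $k=1$ always contributes to $C$. You instead work entirely in the $(\lambda_1,\lambda_2)$-plane: the identity $3e = -(1+\lambda_1)(1+\lambda_2)$, combined with $\lambda_1 \equiv \lambda_2 \equiv 2 \ (\textnormal{mod}\ 3)$ from Proposition~\ref{parameter-conds-prop}(c), gives the congruence at once, and your factored forms $(\lambda_1+3)(\lambda_2+3)\leq 4$ and $(\lambda_1-3)(\lambda_2-3)\leq 7$ are correct translations of the two bounds; I also verified $\lambda_1 \leq -4$ from nontriviality, the identity $k = 2p-3+\frac{6}{p+2}$ with $p=-\lambda_1$, and the divisibility argument forcing $p=4$ when $\lambda_2=2$. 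Your analysis exposes structure the paper's proof does not: the upper bound is an equality precisely at $(\lambda_1,\lambda_2)=(-4,2)$, i.e.\ the $(9,6)$-frame of Theorem~\ref{9x9-sig-thm}, and holds with large slack whenever $\lambda_2 \geq 5$. Two caveats. First, your claim that integrality of $k$ is \emph{genuinely needed} at $\lambda_2=2$ is true only relative to your chosen inputs: the paper closes that case without $k\in\Z$, using the combinatorial fact $C\geq 1$ instead; for instance, at $(\lambda_1,\lambda_2)=(-7,2)$, so $n=15$ and $\mu=-5$, Eq.~3 gives $\gamma+B+C=\frac{n}{3}+\mu=0$, contradicting $C\geq 1$ directly. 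So what is needed is some input beyond the mod-$9$ arithmetic, and the two proofs simply choose different ones. Second, your route is not fully independent of the counting machinery: Proposition~\ref{parameter-conds-prop}, which you invoke for the integrality and residues of the eigenvalues, is itself proved from the unprimed equations, and nontriviality enters your argument through $e\geq 1$ just as it enters the paper's through the existence of a pair with $Q_{ij}=\omega$.
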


\begin{proof}
The fact that $e$ is an integer follows from Proposition~\ref{number-omega-row-col}.  Also since $3e = n - \mu - 2$, by considering the three possibilities of Proposition~\ref{mod9-prop}, we see that in any of these three cases we have $3e \equiv 0 \ (\textnormal{mod} \ 9)$.  Hence $e \equiv 0 \ (\textnormal{mod} \ 3)$.

Next we look at Eq.~1, which shows that $\alpha - B = -\frac{2\mu}{3} - \frac{1}{3}$.  Since $\alpha$ is a non-negative integer we have that $0 \leq \alpha = B -\frac{2\mu}{3} - \frac{1}{3}$ and 
\begin{equation} \label{B-ineq}
B \geq \frac{2 \mu +1}{3}.
\end{equation}
Also, by Eq.~2 we have that $\beta - C = -\frac{2\mu}{3} - \frac{4}{3}$.  Since $\beta$ is a non-negative integer, we have that $0 \leq \beta = C -\frac{2\mu}{3} - \frac{4}{3}$ and 
\begin{equation} \label{C-ineq}
C \geq \frac{2 \mu +4}{3}.
\end{equation}
In addition, by Eq.~5 we have that $b + B + C = \frac{n}{3} + \frac{\mu}{3} - \frac{1}{3} = e + \frac{2\mu +1}{3}$.  Since $b$ is a non-negative integer, we may use \eqref{B-ineq} and \eqref{C-ineq} to obtain $$e +  \frac{2\mu +1}{3} = b + B + C \geq B + C \geq \frac{2 \mu +1}{3} + \frac{2 \mu +4}{3}$$ so that $e \geq  \frac{2\mu + 4}{3}$.  Thus $3e \geq 2 \mu + 4 = 2n - 6e$, and $$e \geq \frac{2n}{9}.$$  

For the upper bound on $e$, we may assume that $Q$ is in standard form, so that every row of $Q$ contains at least one $1$, and consequently $C \geq 1$.  Using Eq.~3 and the fact that $\gamma$ and $B$ are non-negative integers, we have that $$1 \leq \gamma + B + C = \frac{n}{3} + \mu = -3e + \frac{4n}{3} - 2$$ so $3e \leq \frac{4n}{3} - 3$ and $$e \leq \frac{4n-9}{9}.$$
\end{proof}

\begin{remark} By Proposition~\ref{number-omega-row-col}, there are $e$ entries that are equal to $\omega$ and $e$ entries that are equal to $\omega^2.$  Thus, excluding the $Q_{i,1}$ entry, there are $n-2-2e$ entries that are equal to 1.
Thus, by the above inequalities, for each $i, 2 \le i \le n,$ there are always at least $n-2-2e \ge n -2 - 2\frac{4n-9}{9} = \frac{n}{9}$ values of $j, 2 \le j \le n$ for which $Q_{i,j}= 1.$ Hence, there is always at least one such entry.
\end{remark}

\begin{corollary} \label{mu-mod3-cor}
Let $Q$ be a nontrivial cube root signature matrix of an equiangular $(n,k)$-frame satisfying $Q^2 = (n-1)I + \mu Q$.  If we set $e := \frac{n- \mu -2}{3}$, then $\mu$ is an integer with $$\mu \equiv 1 \ (\textnormal{mod} \ 3)$$  and $\mu$ satisfies $$1 - \frac{n}{3} \leq \mu \leq \frac{n}{3}-2.$$
\end{corollary}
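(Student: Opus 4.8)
The plan is to recognize that this corollary is essentially a restatement of Proposition~\ref{e-mod3-prop}, obtained by translating the bounds on $e$ into bounds on $\mu$ through the defining relation $3e = n - \mu - 2$, equivalently $\mu = n - 2 - 3e$.

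First I would dispose of the arithmetic part. The assertion that $\mu$ is an integer with $\mu \equiv 1 \pmod 3$ is precisely Proposition~\ref{parameter-conds-prop}(a), so nothing new is required there; I would simply cite it.

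Next I would handle the two inequalities by substituting $\mu = n - 2 - 3e$ into the bounds $\frac{2n}{9} \leq e \leq \frac{4n-9}{9}$ furnished by Proposition~\ref{e-mod3-prop}. The only point demanding attention is that $\mu$ is a strictly decreasing affine function of $e$, so that the orientation of the inequalities reverses: the lower bound on $e$ yields the upper bound on $\mu$, while the upper bound on $e$ yields the lower bound on $\mu$. Concretely, from $e \geq \frac{2n}{9}$ one gets $3e \geq \frac{2n}{3}$, hence $\mu = n - 2 - 3e \leq n - 2 - \frac{2n}{3} = \frac{n}{3} - 2$; and from $e \leq \frac{4n-9}{9}$ one gets $3e \leq \frac{4n}{3} - 3$, hence $\mu = n - 2 - 3e \geq n - 2 - \frac{4n}{3} + 3 = 1 - \frac{n}{3}$. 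Combining the two yields $1 - \frac{n}{3} \leq \mu \leq \frac{n}{3} - 2$, as claimed.

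There is no genuine obstacle here. Once Proposition~\ref{e-mod3-prop} and Proposition~\ref{parameter-conds-prop}(a) are in hand, the corollary is a one-line change of variables; the only thing I would be careful about is the sign flip in the inequalities arising from the minus sign in front of $3e$.
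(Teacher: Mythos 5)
Your proposal is correct and follows exactly the paper's own argument: the paper likewise cites Proposition~\ref{parameter-conds-prop} for the integrality and the congruence $\mu \equiv 1 \ (\textnormal{mod} \ 3)$, and obtains the bounds by rewriting the inequality of Proposition~\ref{e-mod3-prop} via $e = \frac{n-\mu-2}{3}$. Your explicit attention to the sign reversal under the substitution $\mu = n-2-3e$ is the only detail the paper leaves implicit, and your arithmetic checks out.
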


\begin{remark}
The fact that $\mu$ is an integer congruent to $1$ modulo $3$ is shown in Proposition~\ref{parameter-conds-prop}.  Using that $e = \frac{n-\mu-2}{3}$ we may rewrite the inequality in Proposition~\ref{e-mod3-prop} in terms of $\mu$ to obtain $1 - \frac{n}{3} \leq \mu \leq \frac{n}{3}-2$.
\end{remark}

\section{Narrowing the search for cube root signature matrices} \label{algorithm-sec}

In Section~\ref{Eqs-sec} we derived a number of conditions that the parameters (e.g., $n,k,\mu,e$) of a nontrivial cube root Seidel matrix must satisfy to make it the signature matrix of an equiangular $(n,k)$-frame.  These conditions allow us to rule out a number of possible values for $n$ and $k$.

In particular, the previous results can be incorporated in an algorithm 
to determine the possible $k$ values for a given $n$.

$ $

\noindent \fbox{ \parbox{4.83in}{ 

\begin{center} \underline{\textbf{Algorithm for deducing possible $\boldsymbol{(n,k)}$ values}} \end{center}

\smallskip

\noindent Begin with a value of $n$ that is divisible by $3$ (see Proposition~\ref{parameter-conds-prop}(b)).

\smallskip

\smallskip

\noindent \textbf{Step 1:}  Find all values of $e$ satisfying $\frac{2n}{9} \leq e \leq \frac{4n-9}{9}$ with $e  \equiv 1 \ (\textnormal{mod} \ 3)$.  (See Proposition~\ref{e-mod3-prop}.)

\smallskip

\smallskip

\noindent \textbf{Step 2:}  For each $e$ from Step 1, calculate the value of $\mu = n-3e-2$.

\smallskip

\smallskip

\noindent \textbf{Step 3:}  For each $\mu$ from Step 2, calculate the value of $k = \frac{n}{2} - \frac{\mu n}{2 \sqrt{4(n-1) + \mu^2}}$ (see \eqref{param-eqs}).  The only allowable $(n,k)$-frames with nontrivial 
signature matrices are those with $k$ equal to an integer greater than one.  A necessary condition for $k$ to be an integer is that $\sqrt{4(n-1) + \mu^2}$ is rational.   }}

$ $

$ $

\begin{remark}
One may wonder why in our algorithm we do not simply use Corollary~\ref{mu-mod3-cor} to first find the values of $\mu$ satisfying $1 - \frac{n}{3} \leq \mu \leq \frac{n}{3}-2$ with $\mu \equiv 1 \ (\textnormal{mod} \ 3)$, and then proceed directly to Step 3.  This would seem to eliminate the need to calculate the value of $e$, and reduce our algorithm by one step.  It turns out, however, that this is less efficient.  Since $e = \frac{n-\mu-2}{3}$, there will in general be less values of $e$ found in Step 1 than there will be values of $\mu$ satisfying the condition of Corollary~\ref{mu-mod3-cor}.
\end{remark}

To demonstrate our algorithm we go through the calculations of possible values of $(n,k)$ for $2\le k<n \leq 50$.  To do this we use our algorithm on all values of $n$ that are less than $50$ and a multiple of $3$.

$ $

\noindent \underline{n=3}.  Step 1: We need $\frac{2}{3} \leq e$ and $e \leq \frac{1}{3}$, which cannot occur.  Thus there are no allowable values of $k$ in this case which lead to a nontrivial signature matrix.

\smallskip

\noindent \underline{n=6}.  Step 1: We need $1\frac{1}{3} \leq e \leq 1\frac{2}{3}$ and $e \equiv 0 \ (\textnormal{mod} \ 3)$, which cannot occur. Thus there are no allowable values of $k$ in this case.

\smallskip

\noindent \underline{n=9}.  Step 1: We need $2 \leq e \leq 3$ and $e  \equiv 0 \ (\textnormal{mod} \ 3)$.  Thus $e$ equals $3$.  Step 2: For $e=3$ we have $\mu = -2$.  Step 3: For $\mu = -2$ we have $k = \frac{n}{2} - \frac{\mu n}{2 \sqrt{4(n-1) + \mu^2}} = 6$.  Thus $(9,6)$ is an allowable $(n,k)$ value.

\smallskip

\noindent \underline{n=12}.  Step 1: We need $2\frac{2}{3} \leq e \leq 4\frac{1}{3}$ and $e  \equiv 0 \ (\textnormal{mod} \ 3)$.  Thus $e$ equals $3$.  Step 2: For $e=3$ we have $\mu = 1$.  Step 3: For $\mu = 1$ we have $\sqrt{ 4(n-1) + \mu^2} = \sqrt{45} \notin \Q$, which cannot occur.  Thus there are no allowable values of $k$ in this case.

\smallskip

\noindent \underline{n=15}.  Step 1: We need $3\frac{1}{3} \leq e \leq 5\frac{2}{3}$ and $e  \equiv 0 \ (\textnormal{mod} \ 3)$, which cannot occur.  Thus there are no allowable values of $k$ in this case.

\smallskip

\noindent \underline{n=18}.  Step 1: We need $4 \leq e \leq 7$ and $e  \equiv 0 \ (\textnormal{mod} \ 3)$.  Thus $e$ equals $6$.  Step 2: For $e=6$ we have $\mu = -2$.  Step 3: For $\mu = -2$ we have $\sqrt{ 4(n-1) + \mu^2} = \sqrt{72} \notin \Q$, which cannot occur.  Thus there are no allowable values of $k$ in this case.

\smallskip

\noindent \underline{n=21}.  Step 1: We need $4\frac{2}{3} \leq e \leq 8\frac{1}{3}$ and $e  \equiv 0 \ (\textnormal{mod} \ 3)$.  Thus $e$ equals $6$.  Step 2: For $e=6$ we have $\mu = 1$.  Step 3: For $\mu = 1$ we have $k = \frac{n}{2} - \frac{\mu n}{2 \sqrt{4(n-1) + \mu^2}} = \frac{28}{3} \notin \Q$.  Thus there are no allowable values of $k$ in this case.

\smallskip

\noindent \underline{n=24}.  Step 1: We need $5\frac{1}{3} \leq e \leq 9\frac{2}{3}$ and $e  \equiv 0 \ (\textnormal{mod} \ 3)$.  Thus $e$ equals $6$ or $9$.  Step 2: For $e=6$ we have $\mu = 4$, and for $e=9$ we have $\mu = -5$.  Step 3: For $\mu = 4$ we have $\sqrt{ 4(n-1) + \mu^2} = \sqrt{108} \notin \Q$, which cannot occur.  For $\mu = -5$ we have $\sqrt{ 4(n-1) + \mu^2} = \sqrt{117} \notin \Q$, which cannot occur.  Thus there are no allowable values of $k$ in this case.

\smallskip

\noindent \underline{n=27}.  Step 1: We need $6 \leq e \leq 11$ and $e  \equiv 0 \ (\textnormal{mod} \ 3)$.  Thus $e$ equals $6$ or $9$.  Step 2: For $e=6$ we have $\mu = 7$, and for $e=9$ we have $\mu = -2$.  Step 3: For $\mu = 7$ we have $\sqrt{ 4(n-1) + \mu^2} = \sqrt{153} \notin \Q$, which cannot occur.  For $\mu = -2$ we have $\sqrt{ 4(n-1) + \mu^2} = \sqrt{108} \notin \Q$, which cannot occur.  Thus there are no allowable values of $k$ in this case.

\smallskip

\noindent \underline{n=30}.  Step 1: We need $6\frac{2}{3} \leq e \leq 12\frac{1}{3}$ and $e  \equiv 0 \ (\textnormal{mod} \ 3)$.  Thus $e$ equals $9$ or $12$.  Step 2: For $e=9$ we have $\mu = 1$, and for $e=12$ we have $\mu = -8$.  Step 3: For $\mu = 1$ we have $\sqrt{ 4(n-1) + \mu^2} = \sqrt{117} \notin \Q$, which cannot occur.  For $\mu = -8$ we have $\sqrt{ 4(n-1) + \mu^2} = \sqrt{180} \notin \Q$, which cannot occur.  Thus there are no allowable values of $k$ in this case.

\smallskip

\noindent \underline{n=33}.  Step 1: We need $7\frac{1}{3} \leq e \leq 13\frac{2}{3}$ and $e  \equiv 0 \ (\textnormal{mod} \ 3)$.  Thus $e$ equals $9$ or $12$.  Step 2: For $e=9$ we have $\mu = 4$, and for $e=12$ we have $\mu = -5$.  Step 3: For $\mu = 4$ we have $k = \frac{n}{2} - \frac{\mu n}{2 \sqrt{4(n-1) + \mu^2}} = 11$, which is allowed.   For $\mu = -5$ we have $\sqrt{ 4(n-1) + \mu^2} = \sqrt{153} \notin \Q$, which cannot occur.  Thus $(33,11)$ is the only allowable $(n,k)$ value for $n=33$.

\smallskip

\noindent \underline{n=36}.  Step 1: We need $8 \leq e \leq 15$ and $e \equiv 0 \ (\textnormal{mod} \ 3)$.  Thus $e$ equals $9$ or $12$ or $15$.  Step 2: For $e=9$ we have $\mu = 7$, for $e=12$ we have $\mu = -2$, and for $e=15$ we have $\mu = -11$.  Step 3: For $\mu = 7$ we have $\sqrt{ 4(n-1) + \mu^2} = \sqrt{189} \notin \Q$, which cannot occur. For $\mu = -2$ we have $k = \frac{n}{2} - \frac{\mu n}{2 \sqrt{4(n-1) + \mu^2}} = 21$, which is allowed.   For $\mu = -11$ we have $\sqrt{ 4(n-1) + \mu^2} = \sqrt{261} \notin \Q$, which cannot occur.  Thus $(36,21)$ is the only allowable $(n,k)$ value for $n=36$.

\smallskip

\noindent \underline{n=39}.  Step 1: We need $8\frac{2}{3} \leq e \leq 16\frac{1}{3}$ and $e \equiv 0 \ (\textnormal{mod} \ 3)$.  Thus $e$ equals $9$ or $12$ or $15$.  Step 2: For $e=9$ we have $\mu = 10$, for $e=12$ we have $\mu = 1$, and for $e=15$ we have $\mu = -8$.  Step 3: For $\mu = 10$ we have $\sqrt{ 4(n-1) + \mu^2} = \sqrt{251} \notin \Q$, which cannot occur. For $\mu = 1$ we have $\sqrt{ 4(n-1) + \mu^2} = \sqrt{153} \notin \Q$, which cannot occur.   For $\mu = -8$ we have $\sqrt{ 4(n-1) + \mu^2} = \sqrt{216} \notin \Q$, which cannot occur.  Thus there are no allowable values of $k$ in this case.

\smallskip

\noindent \underline{n=42}.  Step 1: We need $9\frac{1}{3} \leq e \leq 17\frac{2}{3}$ and $e \equiv 0 \ (\textnormal{mod} \ 3)$.  Thus $e$ equals $12$ or $15$.  Step 2: For $e=12$ we have $\mu = 4$, and for $e=15$ we have $\mu = -5$.  Step 3: For $\mu = 4$ we have $\sqrt{ 4(n-1) + \mu^2} = \sqrt{180} \notin \Q$, which cannot occur. For $\mu = -5$ we have $\sqrt{ 4(n-1) + \mu^2} = \sqrt{189} \notin \Q$, which cannot occur.  Thus there are no allowable values of $k$ in this case.

\smallskip

\noindent \underline{n=45}.  Step 1: We need $10 \leq e \leq 19$ and $e \equiv 0 \ (\textnormal{mod} \ 3)$.  Thus $e$ equals $12$ or $15$ or $18$.  Step 2: For $e=12$ we have $\mu = 7$, for $e=15$ we have $\mu = -2$, and for $e=18$ we have $\mu = -11$.  Step 3: For $\mu = 7$ we have $k = \frac{n}{2} - \frac{\mu n}{2 \sqrt{4(n-1) + \mu^2}} = 12$, which is allowed.  For $\mu = -2$ we have $\sqrt{ 4(n-1) + \mu^2} = \sqrt{180} \notin \Q$, which cannot occur.  For $\mu = -11$ we have $\sqrt{ 4(n-1) + \mu^2} = \sqrt{297} \notin \Q$, which cannot occur.  Thus $(45,12)$ is the only allowable $(n,k)$ value for $n=45$.

\smallskip

\noindent \underline{n=48}.  Step 1: We need $10\frac{2}{3} \leq e \leq 20\frac{1}{3}$ and $e \equiv 0 \ (\textnormal{mod} \ 3)$.  Thus $e$ equals $12$ or $15$ or $18$.  Step 2: For $e=12$ we have $\mu = 10$, for $e=15$ we have $\mu = 1$, and for $e=18$ we have $\mu = -8$.  Step 3: For $\mu = 10$ we have $\sqrt{ 4(n-1) + \mu^2} = \sqrt{288} \notin \Q$, which cannot occur. For $\mu = 1$ we have $\sqrt{ 4(n-1) + \mu^2} = \sqrt{189} \notin \Q$, which cannot occur.   For $\mu = -8$ we have $\sqrt{ 4(n-1) + \mu^2} = \sqrt{252} \notin \Q$, which cannot occur.  Thus there are no allowable values of $k$ in this case.

We list the possible values of $(n,k)$ in Table~1.  In addition, although we do not reproduce the calculations here, the authors have computed the possible $(n,k)$ values from $50$ to $100$ as well, and these are also listed in Table~1.

\section{Graphs of cube root signature matrices}

Arguably, the most successful means to find equiangular tight frames in the real case has been via the correspondence 
between graphs and signature matrices.
Much of the graph-theoretic approach to real signature matrices, due largely to \cite{LS73}, can be
repeated in our case by replacing graphs by {\em directed graphs.} 
In this section we make the connection between cube root signature matrices and directed graphs 
explicit and describe necessary and sufficient conditions that a directed graph must satisfy in order to 
give rise to a cube root signature matrix in standard form.

We define a one-to-one correspondence between $n \times n$ selfadjoint
matrices whose diagonal entries are zero and whose off-diagonal
entries are cube roots of unity with directed graphs (with no loops) on $n$ vertices 
in the following manner.
Given such a matrix $Q$, we associate
its rows and columns with vertices, and an entry $Q_{i,j}=\omega$
with a directed edge from the $i$th to the $j$th vertex, while
$Q_{i,j}=\omega^2$ is associated with a directed edge from the $j$th to the
$i$th vertex. When $Q_{i,j} =1 =Q_{j,i}$, then there is no edge
connecting the $i$th and $j$th vertex. We denote this directed graph
by $G(Q).$
Conversely, given any directed graph $G$ on $n$ vertices, and an
enumeration of the vertices, we obtain such a matrix which we denote
by $Q_G$ and we call this matrix the {\em Seidel adjacency matrix} of the
directed graph.

Given a directed graph $G$ and vertices $v$ and $w$ we write $v \to w$
or $w \leftarrow v$
to indicate that there is a directed edge from $v$ to $w$.
We say that $v$ {\em emits} the edge and that $w$ {\em receives} the edge. 
When there is no directed edge (in either direction) between $v$ and
$w$, we write $v \wr w.$

A key element of Seidel's theory \cite{LS73} was the
introduction of his switching equivalence of graphs. 
We reformulate the equivalence relation
for complex Seidel matrices for directed graphs.

Suppose that we have a self-adjoint matrix $Q$ whose off-diagonal entries are cube roots of unity and we associate a directed graph to $Q$ as above.  If we let $D$ be the diagonal matrix satisfying $d_{j,j} =1, j \ne i$ and $d_{i,i} = \omega$, then the directed graph $G_1$ associated to $DQD^*$ is obtained from the directed graph $G$ associated to $Q$ by
\begin{itemize}
\item inserting a directed edge from $i$ to $j$ whenever $i$ and $j$ had no edge, 
\item replacing a directed edge from $i$ to $j$ by a directed edge from $j$ to $i$, and
\item deleting all directed edges from $j$ to $i$.
\end{itemize}

We shall refer to this set of operations as the {\em $\omega$-switching on the $i$th vertex.} Note that unlike Seidel's switching for undirected graphs\cite{LS73}, when we switch twice on the $i$th vertex, we do not return to the original graph. Here when we switch three times, we return to the original directed graph.
If we consider cube root signature matrices that are in standard form,
then in the corresponding directed graph the first vertex neither
emits nor receives any edges.
Thus, without loss of generality, we may ignore this vertex and focus
on the directed subgraph on $m=n-1$ vertices. 

We now wish to  describe explicitly in graph theoretical terms the
directed graphs that correspond to cube root signature matrices in
standard form.

\begin{definition} We call a directed graph on $m$ vertices {\em $e$-regular,} provided that each vertex emits exactly $e$ directed edges and receives exactly $e$ directed edges.
\end{definition}

Given a directed graph $G$ and vertices $v$ and $w$ we now wish to
describe the parameters that are the natural equivalents of the
parameters of Section~3.

Given a directed graph $G$ with vertex set $V$ and vertices $v$ and $w$ with $v \to w,$ we
set
\begin{align*}
\alpha &= \# \{ u \in V : v \to u \text{ and } w \to u \} \\
\beta &= \# \{ u \in V : v \to u \text{ and } u \to w \} \\
\gamma &= \# \{ u \in V : v \to u \text{ and } u \wr w \} \\
a &= \# \{ u \in V : u \to v \text{ and }  w \to u \} \\
b &= \# \{ u \in V : u \to v \text{ and } u \to w \} \\
c &= \# \{ u \in V : u \to v \text{ and } u \wr w \} \\
A &= \# \{ u \in V : u \wr v \text{ and } w \to u \} \\
B &= \# \{ u \in V : u \wr v \text{ and } u \to w \} \\
C_1 &= \# \{ u \in V : u \wr v \text{ and } u \wr w \}.
\end{align*}

We have introduced the slight change of notation, $C_1,$ from our
earlier notation $C,$ because since we have omitted the corresponding
first vertex, we will have that $C_1 = C -1.$ 

Similarly, given a directed graph $G$ with vertex set $V$ and vertices
$v$ and $w$ such that $v \wr w,$ we set

\begin{align*}
\alpha^{\prime} &= \# \{ u \in V : v \to u \text{ and } w \to u \} \\
\beta^{\prime} &= \# \{ u \in V : v \to u \text{ and } u \to w \} \\
\gamma^{\prime} &= \# \{ u \in V : v \to u \text{ and } u \wr w \} \\
a^{\prime} &= \# \{ u \in V : u \to v \text{ and } w \to u \} \\
b^{\prime} &= \# \{ u \in V : u \to v \text{ and } u \to w \} \\
c^{\prime} &= \# \{ u \in V : u \to v \text{ and } u \wr w \} \\
A^{\prime} &= \# \{ u \in V : u \wr v \text{ and } w \to u \} \\
B^{\prime} &= \# \{ u \in V : u \wr v \text{ and } u \to w \} \\
C_1^{\prime} &= \# \{ u \in V : u \wr v \text{ and } u \wr w \}.
\end{align*}
Again we will have that $C_1^{\prime} = C^{\prime} -1.$

We now reinterpret Theorem~\ref{nasc} in the terminology of directed graphs.

\begin{theorem} \label{nasc-digraph} There exists a nontrivial $n \times n$
  matrix $Q$ in standard form whose off-diagonal entries are cube
  roots of unity and such that $Q$ satisfies $Q^2 = (n-1)I + \mu Q,$
  for some $\mu,$
 if and only if for some $e > 0$ there exists a non-empty $e$-regular directed graph on $m=n-1$ vertices such that each pair of vertices with a directed edge from $v$ to $w$ satisfies,
\begin{align}
\alpha - B &= \frac{6e+1-2m}{3} \tag{Eq.~d.1} \\
\beta - C_1 &= \frac{6e-2m+1}{3} \tag{Eq.~d.2} \\
\gamma + B + C_1 &= \frac{4m-9e-5}{3} \tag{Eq.~d.3} \\
 a - C_1 &= \frac{3e-m+2}{3}  \tag{Eq.~d.4} \\
 b + B + C_1 &= \frac{2m-3e-4}{3} \tag{Eq.~d.5} \\
 c - B &= \frac{3e-m+2}{3} \tag{Eq.~d.6} \\
 A + B + C_1 &=  m-2e-1, \tag{Eq.~d.7}  
\end{align}
while each pair of vertices $v$ and $w$ with no edges between them satisfies
\begin{align}
\alpha^{\prime}= B^{\prime}  \tag{Eq.~d.8} \\
\beta^{\prime} = C^{\prime}_1 +3e+2 -m \tag{Eq.~9} \\
\gamma^{\prime} = m -2e -2 - B^{\prime} - C^{\prime}_1 \tag{Eq.~d.10} \\
a^{\prime} =  C^{\prime}_1 + 3e + 2 -m \tag{Eq.~d.11} \\
b^{\prime} = m -2e -2 - B^{\prime} - C^{\prime}_1 \tag{Eq.~d.12} \\
c^{\prime} = B^{\prime} \tag{Eq.~d.13} \\
A^{\prime} = m - 2e - 2 - B^{\prime} - C^{\prime}_1. \tag{Eq.~d.14} 
\end{align}
\end{theorem}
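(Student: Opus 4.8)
The plan is to obtain Theorem~\ref{nasc-digraph} as a direct dictionary translation of Theorem~\ref{nasc} through the bijective correspondence $Q \leftrightarrow G(Q)$ set up above. First I would note that when $Q$ is a cube root Seidel matrix in standard form, its first row and column are all $1$'s off the diagonal, so in $G(Q)$ the first vertex neither emits nor receives edges; discarding it leaves a directed graph on $m = n-1$ vertices, and the existence of the matrix $Q$ is equivalent to the existence of this directed graph. The crucial structural point is that, under the conventions $Q_{ij} = \omega \Leftrightarrow i \to j$ and $Q_{ij} = \omega^2 \Leftrightarrow j \to i$, the $\omega$'s in row $i$ count the edges emitted by vertex $i$ and the $\omega^2$'s count the edges received; hence Proposition~\ref{number-omega-row-col}, which says each row and column of $Q$ has exactly $e$ entries equal to $\omega$ and $e$ equal to $\omega^2$, is precisely the assertion that $G(Q)$ is $e$-regular.

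Next I would verify that the nine graph parameters $\alpha, \beta, \gamma, a, b, c, A, B, C_1$ attached to a directed edge $v \to w$ agree with the matrix parameters of Section~\ref{Eqs-sec} attached to an entry $Q_{ij} = \omega$ (where $i, j$ are the vertices $v, w$), with the single difference that $C_1 = C - 1$, the discrepancy coming from the deleted first vertex $u = 1$, which satisfies $u \wr v$ and $u \wr w$. The same identification, with $C_1' = C' - 1$, holds for the primed parameters attached to a non-edge $v \wr w$, i.e.\ to an entry $Q_{ij} = 1$ with $2 \le i, j \le n$. The only care required here is with the edge orientations: for instance, $a = \#\{u : u \to v \text{ and } w \to u\}$ matches $\#\{k : Q_{ik} = \omega^2 \text{ and } Q_{kj} = \omega^2\}$ once one reads off $Q_{ik} = \omega^2 \Leftrightarrow k \to i$ and $Q_{kj} = \omega^2 \Leftrightarrow j \to k$, and similarly for the other asymmetric parameters $A$, $B$, $b$.

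With the parameters matched, I would obtain Eq.~d.1--Eq.~d.14 from Eq.~1--Eq.~14 by the purely algebraic substitutions $n = m + 1$ and $\mu = n - 3e - 2 = m - 3e - 1$ (the relation $3e + \mu + 2 = n$ of Theorem~\ref{nasc}), together with $C = C_1 + 1$ and $C' = C_1' + 1$. For example, Eq.~3 reads $\gamma + B + C = \frac{n}{3} + \mu$; substituting $C = C_1 + 1$, $n = m+1$, and $\mu = m - 3e - 1$ gives $\gamma + B + C_1 = \frac{4m - 9e - 5}{3}$, which is Eq.~d.3, and the remaining thirteen equations reduce by the same routine replacements. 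Finally, $Q$ is nontrivial exactly when some off-diagonal entry equals $\omega$, i.e.\ when $G(Q)$ has at least one directed edge (equivalently $e > 0$), which is the non-emptiness hypothesis; Theorem~\ref{nasc} then supplies that $Q^2 = (n-1)I + \mu Q$ holds if and only if Eq.~1--Eq.~7 hold at every $\omega$-entry and Eq.~8--Eq.~14 at every off-diagonal $1$-entry, and these translate verbatim into the conditions on the edges and non-edges of $G(Q)$.

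The main obstacle will be purely organizational rather than mathematical: I must keep the edge-orientation conventions consistent when matching the nine parameters (particularly the asymmetric roles of $v$ and $w$ in $a$, $A$, $b$, $B$), and I must not lose track of the constant shift $C = C_1 + 1$ forced by the isolated first vertex. Once these bookkeeping conventions are pinned down, each of the fourteen equations becomes an elementary substitution, and the proof needs no input beyond Theorem~\ref{nasc} and Proposition~\ref{number-omega-row-col}.
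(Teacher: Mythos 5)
Your proposal is correct and is essentially the argument the paper intends: Theorem~\ref{nasc-digraph} is presented as a direct reinterpretation of Theorem~\ref{nasc}, with $e$-regularity corresponding to Proposition~\ref{number-omega-row-col}, the nine parameters matched under the orientation dictionary $Q_{ij}=\omega \Leftrightarrow i \to j$, and Eq.~d.1--Eq.~d.14 obtained from Eq.~1--Eq.~14 by the substitutions $n=m+1$, $\mu = m-3e-1$, $C = C_1+1$, $C' = C_1'+1$ (all of which check out). Your handling of the shift $C_1 = C-1$ from the deleted isolated vertex and of nontriviality versus $e>0$ matches the paper's treatment exactly.
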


Moreover, in the above case we have that $\mu = m - 3e -1$ and we obtain $Q$ from the directed graph on $n-1$ vertices by adjoining a row and column of $+1$'s to the adjacency matrix of the directed graph.

The results of Section~3 can now be interpreted as necessary conditions on $m$
and $e$ for the existence of such graphs.

If $G$ is the directed graph associated to $Q$ and after a sequence of switchings on various vertices, we obtain a new directed graph $G_1$, then it is easy to see that the matrix $Q_1$ associated to $G_1$ satisfies $Q_1 = DQD^*$ for some diagonal matrix whose entries are cube roots of unity. In fact $D$ can be taken to be the diagonal matrix whose $i$th diagonal entry is $\omega^{s_i}$ where $s_i$ is the number of times that we switched on the $i$th vertex.

The proof of Lemma~\ref{permutation-lemma} shows that given a directed
graph $G$ on $n$ vertices and a fixed vertex $v$, then $G$
is switching equivalent to a unique directed graph where the vertex
$v$ is isolated, i.e., such that $v$ neither emits nor receives any
directed edges. We let $G_v$ denote the directed graph on $n-1$
vertices that one obtains by deleting this isolated vertex.

\begin{proposition}\label{switching} Let $G$ be a directed graph on $n$ vertices with
  adjacency matrix $Q$ Then the following are equivalent:
\begin{enumerate}
\item $Q^2 = (n-1)I + \mu Q$ for some real number $\mu,$ 
\item for some vertex $v$, the graph $G_v$ is $e$-regular and
  satisfies Eq.~d.1--Eq.~d.14,
\item for every vertex $v$, the graph $G_v$ is $e$-regular and
  satisfies Eq.~d.1--Eq.d.14.
\end{enumerate}
Moreover, in this case $\mu = n -3e -2.$
\end{proposition}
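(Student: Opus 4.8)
The plan is to prove the three conditions equivalent by establishing the cycle $(3)\Rightarrow(2)\Rightarrow(1)\Rightarrow(3)$. The implication $(3)\Rightarrow(2)$ is immediate, since a property holding for every vertex certainly holds for some vertex. The substance lies in the other two implications, and the engine driving both is that the quadratic relation in $(1)$ is invariant under $\omega$-switching and under relabeling of vertices, combined with the pointwise graph-theoretic characterization already recorded in Theorem~\ref{nasc-digraph}.

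First I would isolate the invariance principle. An $\omega$-switch on a vertex replaces $Q$ by $DQD^*$ for a diagonal unitary $D$ whose entries are cube roots of unity, as noted in the paragraph before the statement, and relabeling vertices conjugates $Q$ by a permutation matrix; both are unitary conjugations. Conjugating the polynomial identity $Q^2=(n-1)I+\mu Q$ by such a unitary yields the identical identity for the conjugated matrix with the \emph{same} value of $\mu$. Hence condition $(1)$, together with its value of $\mu$, is preserved under arbitrary sequences of switchings and relabelings. Next I would invoke the consequence of the proof of Lemma~\ref{permutation-lemma} recorded just before the statement: for each fixed vertex $v$, the graph $G$ is switching equivalent to a graph in which $v$ is isolated, and placing $v$ in the first coordinate puts the resulting adjacency matrix $\tilde Q$ in standard form, with the directed subgraph on the remaining $m=n-1$ vertices equal to $G_v$.

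With these in hand, both remaining implications reduce to reading Theorem~\ref{nasc-digraph} in its two directions for the standard-form matrix $\tilde Q$. For $(2)\Rightarrow(1)$, if $G_v$ is $e$-regular and satisfies Eq.~d.1--Eq.~d.14 for some $v$, then Theorem~\ref{nasc-digraph} gives $\tilde Q^2=(n-1)I+\mu\tilde Q$ with $\mu=m-3e-1=n-3e-2$; since $\tilde Q$ arises from $Q$ by switching and relabeling, the invariance principle returns $Q^2=(n-1)I+\mu Q$. For $(1)\Rightarrow(3)$, fix an arbitrary $v$: the invariance principle transports the relation to $\tilde Q$, and the forward direction of Theorem~\ref{nasc-digraph} forces $G_v$ to be $e$-regular and to satisfy Eq.~d.1--Eq.~d.14. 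The point that makes this succeed for \emph{every} $v$ at once is that $e$ is not chosen per vertex but is pinned down by the global data via $e=(n-\mu-2)/3$, so a single $e$ serves all $v$; this simultaneously yields the stated formula $\mu=n-3e-2$.

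The step I expect to require the most care is the boundary case excluded by Theorem~\ref{nasc-digraph}, namely when $Q$ is trivial. Here the switched matrix $\tilde Q$ has all off-diagonal entries equal to $1$, so $G_v$ is the empty graph and $e=0$, violating the hypothesis $e>0$ of Theorem~\ref{nasc-digraph}. I would dispose of this by direct computation: a trivial $Q$ equals $J-I$, where $J$ is the all-ones matrix, and satisfies $Q^2=(n-1)I+(n-2)Q$, so $(1)$ holds with $\mu=n-2$; conversely the empty graph is vacuously $e$-regular with $e=0$, the pair-equations Eq.~d.8--Eq.~d.14 collapse to identities once $e=0$ and $m=n-1$ are substituted, and Eq.~d.1--Eq.~d.7 hold vacuously for lack of edges. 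Thus all three conditions hold simultaneously in the trivial case with $\mu=n-2=n-3\cdot 0-2$, consistent with the formula. Apart from this bookkeeping, the argument is a formal consequence of conjugation-invariance of the identity and the characterization in Theorem~\ref{nasc-digraph}.
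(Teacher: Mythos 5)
Your proof is correct and takes essentially the same route as the paper: the identity $Q^2=(n-1)I+\mu Q$ is invariant under conjugation by the diagonal unitaries implementing $\omega$-switching (and by permutations), so for each vertex $v$ one passes to a standard-form matrix whose reduced graph is $G_v$ and applies Theorem~\ref{nasc-digraph} in both directions, with $e=(n-\mu-2)/3$ fixed globally. Your explicit check of the trivial case $Q=J-I$ (where $e=0$ falls outside the $e>0$, nontriviality hypotheses of Theorem~\ref{nasc-digraph}) is a boundary case the paper's one-paragraph proof passes over silently, and your computation there is correct.
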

\begin{proof} We have that $Q^2 = (n-1)I + \mu Q$ if and only if
  $(D^*QD)^2 = (n-1)I + \mu (D^*QD)$ for any diagonal matrix of whose
  entries consist of cube roots of unity. These matrices represent the
  adjacency matrices of all directed graphs switching equivalent to
  $G$ with the same enumeration of vertices.
Thus, if the adjacency matrix for $G$ satisfies the above equation,
then the adjacency matrix for every directed graph switching
equivalent to $G$
satisfies the above equation. The result now follows by applying
theorem~\ref{nasc-digraph}.
\end{proof}

We conclude that a necessary condition for the existence of cube root
signature matrices is the existence of directed graphs without undirected edges for which all vertices have 
the same in-degree and out-degree $e$. Moreover, if there is an edge $v\to w$, then
the paths between $v$ to $w$ of length 2
satisfy conditions (Eq.~d.1-7) and if there is no edge between $v$ and $w$, then
they satisfy conditions (Eq.~d.8-14). If there is only one solution
for these parameters, then this implies that the graph is
a directed strongly regular graph  \cite{Duval88}.

\section{Examples of cube root signature matrices with two eigenvalues}

\subsection{A first example}

In the literature on directed strongly regular  graphs, there is a graph on 8 vertices,
all with same in-degree and out-degree $e=3$ \cite[Example 4.1]{KMMZ97}. Indeed, it turns
out that the cube root Seidel matrix of this directed graph is the signature matrix
of an equiangular $(9,6)$-frame. This is, in fact, the unique frame up to switching, 
which follows from the uniqueness of directed strongly regular graphs on 8
vertices with in-degree and out-degree 3 \cite{Ham83}.

\begin{theorem} \label{9x9-sig-thm}
The matrix $$Q = \begin{pmatrix}
0 & 1 & 1 & 1 & 1 & 1 & 1 & 1 & 1 \\
1 & 0 & 1 & \omega & \omega & \omega & \omega^2 & \omega^2 & \omega^2 \\
1 & 1 & 0 & \omega^2 & \omega^2 & \omega^2 & \omega & \omega & \omega \\
1 & \omega^2 & \omega & 0 & \omega & \omega^2 & 1 & \omega & \omega^2 \\
1 & \omega^2 & \omega & \omega^2 & 0 & \omega & \omega & \omega^2 & 1 \\
1 & \omega^2 & \omega & \omega & \omega^2 & 0 & \omega^2 & 1 & \omega \\
1 & \omega & \omega^2 & 1 & \omega^2 & \omega & 0 & \omega^2 & \omega \\
1 & \omega & \omega^2 & \omega^2 & \omega & 1 & \omega & 0 & \omega^2 \\
1 & \omega & \omega^2 & \omega & 1 & \omega^2 & \omega^2 & \omega & 0
\end{pmatrix}$$ is a $9 \times 9$ nontrivial cube root signature
matrix of an equiangular $(9,6)$-frame.  Furthermore, any $9 \times 9$ nontrivial 
cube root signature matrix belonging to $k=6$ is switching equivalent to $Q$.
\end{theorem}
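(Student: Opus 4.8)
The plan is to treat the two assertions separately: first that the displayed matrix $Q$ is a cube root signature matrix of an equiangular $(9,6)$-frame, and then that it is the unique such matrix up to switching. For the first assertion I would begin by computing the expected value of $\mu$: with $n=9$ and $k=6$, the parameter equations \eqref{param-eqs} give $\mu = (n-2k)\sqrt{(n-1)/(k(n-k))} = -3\sqrt{8/18} = -2$. It then suffices to verify the single matrix identity $Q^2 = 8I - 2Q$, which is a finite computation, most cleanly organized by observing that $Q$ is the Seidel adjacency matrix of the directed graph on $8$ vertices of \cite[Example 4.1]{KMMZ97} with a row and column of $1$'s adjoined, so that the entries of $Q^2$ are precisely the length-two path counts governed by Theorem~\ref{nasc-digraph}. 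Once $Q^2 = 8I - 2Q$ is confirmed, Theorem~\ref{2-uniform-char} shows $Q$ is the signature matrix of an equiangular $(9,k)$-frame, and re-substituting $\mu=-2$, $n=9$ into $k = \tfrac{n}{2} - \tfrac{\mu n}{2\sqrt{4(n-1)+\mu^2}}$ yields $k=6$. Nontriviality is immediate, since $Q$ is already in standard form and has off-diagonal entries equal to $\omega$.

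For the uniqueness assertion, let $Q'$ be any nontrivial $9\times 9$ cube root signature matrix with $k=6$. By Lemma~\ref{permutation-lemma} I may replace $Q'$ by a switching-equivalent matrix in standard form without changing $k$, so I assume $Q'$ is in standard form. The parameters are then forced: $(n,k)=(9,6)$ gives $\mu = -2$ as above, whence $e = (n-\mu-2)/3 = 3$ and $m = n-1 = 8$. By the correspondence of Section~5 and Proposition~\ref{switching}, the directed graph $G'$ obtained by deleting the isolated first vertex is a $3$-regular directed graph on $8$ vertices satisfying conditions Eq.~d.1--Eq.~d.14.

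The next step is to show these conditions pin down a directed strongly regular graph with a single admissible parameter set. Substituting $m=8$ and $e=3$ into Eq.~d.3 gives $\gamma + B + C_1 = 0$, which by nonnegativity of the counts forces $\gamma = B = C_1 = 0$; the remaining edge equations Eq.~d.1, d.2, d.4, d.5, d.6, d.7 then yield $\alpha=\beta=a=b=c=A=1$. Likewise Eq.~d.10 reads $\gamma' + B' + C_1' = m-2e-2 = 0$, forcing $\gamma' = B' = C_1' = 0$, and the remaining non-edge equations give $\beta' = a' = 3$ and $\alpha' = b' = c' = A' = 0$. Since every path-count parameter is uniquely determined, $G'$ is a directed strongly regular graph with the same parameters as the graph underlying $Q$.

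Finally I would invoke the uniqueness of directed strongly regular graphs on $8$ vertices with in-degree and out-degree $3$ \cite{Ham83} to conclude that $G'$ is isomorphic to the graph underlying $Q$. A graph isomorphism is a relabeling of the vertices, i.e. conjugation of the $8\times 8$ adjacency block by a permutation matrix; extending this permutation to fix the isolated first vertex and reattaching the row and column of $1$'s produces a permutation matrix $P$ with $PQ'P^{*} = Q$. Hence $Q'$ is permutation-equivalent, and in particular switching equivalent, to $Q$. The main obstacle is not the parameter bookkeeping, which collapses at once, but rather the careful passage between the three notions of equivalence in play---switching equivalence of signature matrices, switching equivalence of directed graphs, and graph isomorphism---together with the legitimate application of the external classification \cite{Ham83} to upgrade ``unique parameters'' to ``unique up to isomorphism.''
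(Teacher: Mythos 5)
Your proposal is correct and follows essentially the same route as the paper's proof: verify $Q^2 = 8I - 2Q$ directly, pass to the $8$-vertex directed graph with $e=3$, show the equations Eq.~d.1--Eq.~d.14 admit the unique solution $\alpha=\beta=a=b=c=A=1$, $\gamma=B=C_1=0$, $\beta'=a'=3$ with all other primed parameters zero, and conclude uniqueness up to vertex relabeling, which the paper likewise attributes to the classification in \cite{Ham83}. The only difference is one of emphasis: where you delegate the final ``unique parameters implies unique digraph'' step entirely to that citation, the paper sketches the reconstruction itself (each vertex is non-adjacent to exactly one other, so the underlying $6$-regular graph on $8$ vertices is forced to be the complete $4$-partite graph on pairs, and the conditions $\beta'=a'=3$ then force the orientation), and your careful bookkeeping of standard form, permutation matrices, and switching equivalence makes explicit what the paper compresses into ``up to a renumbering of the vertices.''
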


\begin{proof}

To see that $Q$ is a signature matrix, one need only verify that $Q^2 = 8I -2Q$.

We prove the claimed uniqueness using
a graph-theoretic argument \cite{Ham83}.  It will be enough to show that up to a
renumbering of the vertices there is a unique directed graph on 8
vertices satisfying the conditions, Eq.~d.1--Eq.~d.14.
To this end first note that since $e=3,$ there will be exactly $9-2e-2
=1$ entry in each row, other than $Q_{i,1}$, that is equal to $1.$
Thus, in the directed graph on 8 vertices, each vertex will emit 3
directed edges, receive 3 directed edges, and not be connected
 to exactly one vertex.

However, there is a unique 6-regular graph on 8 vertices.
This is because each non-adjacent pair of vertices has
6 common neighbors. Thus, the graph can be partitioned
into 4 such pairs, and is seen to be the complete 4-partite graph
corresponding to this partition of 8 vertices in sets of size 2.

Next one finds that the equations Eq.~d.1--Eq.~d.14 have a unique set
of solutions, $\alpha = \beta = a = b = c = A =1, \gamma =B = C_1 =0$
and $\alpha^{\prime} =\gamma^{\prime} = b^{\prime} = c^{\prime} =
A^{\prime} = 0, \beta^{\prime} = a^{\prime} =3.$ Thus, in the directed
graph, if $v \wr w,$ then since $\beta^{\prime} = 3,$ the three
directed edges emitted by $v$ terminate at the three directed edges
received by $w$. Similarly, since $a^{\prime} =3,$ the three directed
edges received by $v$ are emitted by the three vertices where the
three directed edges emitted by $w$ terminate. 

One now finds that there is a unique directed graph on 8 vertices
satisfying these relations.
\end{proof}

\subsection{Creating new signature matrices from old}

In this section we discuss ways to form new signature matrices from existing ones.

\begin{proposition} \label{bootstrap-prop}
Suppose that $Q_1$ and $Q_2$ are $n_1 \times n_1$ and $n_2 \times n_2$ signature matrices 
satisfying the 
equation $$Q_i^2 = (n_i-1)I_{n_i} -2 Q_i,\qquad i \in \{1,2\}$$  
Then  the matrix $Q := (Q_1+I_{n_1}) \otimes (Q_2+I_{n_2}) - I_{n_1n_2}$ is an $n_1 n_2\times n_1 n_2$ signature matrix satisfying the equation $$Q^2 = (n_1 n_2-1)I_{n_1 n_2} - 2 Q.$$
\end{proposition}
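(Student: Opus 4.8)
The plan is to simplify all of the algebra by the substitution $P_i := Q_i + I_{n_i}$, which converts the given signature relation $Q_i^2 = (n_i-1)I_{n_i} - 2Q_i$ into the far cleaner identity $P_i^2 = n_i I_{n_i}$: expanding $P_i^2 = Q_i^2 + 2Q_i + I_{n_i}$ and inserting the hypothesis yields $P_i^2 = (n_i-1)I_{n_i} - 2Q_i + 2Q_i + I_{n_i} = n_i I_{n_i}$. In this notation the matrix under study is simply $Q = P_1 \otimes P_2 - I_{n_1 n_2}$, and essentially all of the work then reduces to combining $P_i^2 = n_i I_{n_i}$ with the mixed-product property of the Kronecker product.

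First I would check that $Q$ is a Seidel matrix. Self-adjointness is immediate, since each $P_i$ is self-adjoint and the tensor product and difference of self-adjoint matrices is self-adjoint. For the diagonal, every diagonal entry of $P_i$ equals $(Q_i)_{\ell\ell} + 1 = 1$, so every diagonal entry of $P_1 \otimes P_2$ equals $1\cdot 1 = 1$, whence $Q = P_1 \otimes P_2 - I_{n_1 n_2}$ has vanishing diagonal. Off the diagonal $Q$ agrees with $P_1 \otimes P_2$, whose entries are products $(P_1)_{ii'}(P_2)_{jj'}$; each factor has modulus one, being either a diagonal $1$ of $P_i$ or an off-diagonal entry of $Q_i$, so every off-diagonal entry of $Q$ has modulus one. (When the $Q_i$ have cube-root-of-unity entries, each factor is in addition a cube root of unity, so $Q$ is again a cube root Seidel matrix.) Thus $Q$ is a Seidel matrix.

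It then remains only to verify the quadratic relation. Using $(P_1 \otimes P_2)^2 = P_1^2 \otimes P_2^2$ together with $P_i^2 = n_i I_{n_i}$ and $n_1 I_{n_1} \otimes n_2 I_{n_2} = n_1 n_2 I_{n_1 n_2}$, I compute
\[
Q^2 = (P_1 \otimes P_2 - I_{n_1 n_2})^2 = P_1^2 \otimes P_2^2 - 2(P_1 \otimes P_2) + I_{n_1 n_2} = n_1 n_2 I_{n_1 n_2} - 2(P_1 \otimes P_2) + I_{n_1 n_2}.
\]
Since $P_1 \otimes P_2 = Q + I_{n_1 n_2}$ by definition, this becomes $Q^2 = n_1 n_2 I_{n_1 n_2} - 2(Q + I_{n_1 n_2}) + I_{n_1 n_2} = (n_1 n_2 - 1)I_{n_1 n_2} - 2Q$, exactly as claimed. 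Finally, because $Q$ is a Seidel matrix satisfying a relation of the form $Q^2 = (N-1)I + \mu Q$ with $N = n_1 n_2$ and $\mu = -2$, Theorem~\ref{2-uniform-char} guarantees that $Q$ has exactly two eigenvalues and is therefore the signature matrix of an equiangular frame.

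I anticipate no serious obstacle here: once the substitution $P_i = Q_i + I_{n_i}$ is made, the whole argument is a short bookkeeping computation with Kronecker products. The only points demanding genuine care are confirming that the off-diagonal entries of $Q$ stay of modulus one (this is precisely where the corrective term $-I_{n_1 n_2}$ is needed, since it removes the otherwise-present diagonal $1$'s of $P_1 \otimes P_2$) and applying the mixed-product identity $(A \otimes B)(C \otimes D) = AC \otimes BD$ correctly.
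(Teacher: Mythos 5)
Your proof is correct and is essentially the same as the paper's: the substitution $P_i = Q_i + I_{n_i}$ with $P_i^2 = n_i I_{n_i}$ is precisely the computation the paper carries out by expanding $(Q + I_{n_1 n_2})^2 = (Q_1+I_{n_1})^2 \otimes (Q_2+I_{n_2})^2 = n_1 n_2 I_{n_1 n_2}$ via the mixed-product property, and your entrywise check that $Q$ is a Seidel matrix matches (and slightly elaborates) the paper's opening observation.
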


\begin{proof}
We first observe that $Q$ is self-adjoint, and since $Q_1$ and $Q_2$ contain unimodular off-diagonal entries 
and the identity matrix $I$ contains $1$'s along the diagonal, we have that the diagonal entries of $Q$ are zero, and the off-diagonal entries of $Q$ are unimodular.  Furthermore,
\begin{align*}
(Q+I_{n_1 n_2})^2 &= [(Q_1+I_{n_1}) \otimes  (Q_2+I_{n_2})]^2 \\
&= (Q_1+I_{n_1})^2 \otimes (Q_2+I_{n_2})^2 \\
&= (Q_1^2 + 2Q_1 + I_{n_1}) \otimes (Q_2^2 + 2Q_2 + I_{n_2}) \\
&= ((n_1-1)I_{n_1} - 2Q_1 + 2Q_1 + I_{n_1}) \otimes  ((n_2-1)I_{n_2} - 2Q_2 + 2Q_2 + I_{n_2}) \\
&= n_1 I_{n_1} \otimes  n_2 I_{n_2} \\
&= n_1 n_2  (I_{n_1} \otimes  I_{n_2}) \\
&= n_1 n_2  I_{n_1 n_2}.
\end{align*}
Thus $Q^2 + 2Q + I_{n_1 n_2} = n_1 n_2 I_{n_1 n_2}$, and $Q^2 = (n_1 n_2 -1)I_{n_1 n_2} - 2Q$, 
so $Q$ is a signature matrix.
\end{proof}

\begin{theorem} \label{bootstrap-9-thm}
For each $m \in \N$ there exists a nontrivial $9^m \times 9^m$ cube root signature matrix $Q$ satisfying $Q^2 = (9^m-1)I_{9^m} - 2 Q$.
\end{theorem}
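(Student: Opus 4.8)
The plan is to prove this by induction on $m$, using Proposition~\ref{bootstrap-prop} as the inductive step. The base case $m=1$ is already established by Theorem~\ref{9x9-sig-thm}, which exhibits a nontrivial $9 \times 9$ cube root signature matrix $Q$ satisfying $Q^2 = 8I_9 - 2Q$; note that $8 = 9-1$ and $\mu = -2$, so this is exactly the equation required with $n_1 = 9$.

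For the inductive step, I would assume there exists a nontrivial $9^m \times 9^m$ cube root signature matrix $Q_m$ satisfying $Q_m^2 = (9^m - 1)I_{9^m} - 2Q_m$. Taking $Q_1$ to be the $9 \times 9$ matrix from Theorem~\ref{9x9-sig-thm} and $Q_2 = Q_m$ in Proposition~\ref{bootstrap-prop}, with $n_1 = 9$ and $n_2 = 9^m$, both hypotheses $Q_i^2 = (n_i - 1)I_{n_i} - 2Q_i$ are met. The proposition then produces a signature matrix $Q_{m+1} = (Q_1 + I_9) \otimes (Q_m + I_{9^m}) - I_{9^{m+1}}$ of size $9 \cdot 9^m = 9^{m+1}$ satisfying $Q_{m+1}^2 = (9^{m+1} - 1)I_{9^{m+1}} - 2Q_{m+1}$, which is the desired equation.

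The one point requiring genuine care, rather than a direct citation, is verifying that the entries of $Q_{m+1}$ are cube roots of unity and that $Q_{m+1}$ is \emph{nontrivial}. Proposition~\ref{bootstrap-prop} only guarantees that the off-diagonal entries are unimodular, so I would check separately that each off-diagonal entry of $(Q_1 + I_9) \otimes (Q_m + I_{9^m}) - I_{9^{m+1}}$ is a product of a cube root of unity from $Q_1 + I_9$ and one from $Q_m + I_{9^m}$ (the diagonal $+1$ additions keep these factors in $\{1, \omega, \omega^2\}$), and hence is itself a cube root of unity since this set is closed under multiplication. For nontriviality, I would observe that $Q_1$ contains an entry equal to $\omega$, so the tensor product contains an off-diagonal entry equal to $\omega \cdot 1 = \omega$ (pairing the $\omega$-entry of $Q_1+I_9$ with a diagonal $1$-entry of $Q_m + I_{9^m}$), which is not equal to $1$; by the discussion following Proposition~\ref{number-omega-row-col}, a nontrivial matrix is precisely one whose standard form has an off-diagonal entry different from $1$.

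The main obstacle I anticipate is not the algebra, which is routine given the proposition, but ensuring the nontriviality claim is airtight: one must confirm that the presence of a non-unit entry survives the reduction to standard form, so that $e \neq 0$ and the matrix genuinely falls outside the trivial case excluded in the Remark following Theorem~\ref{nasc}. Since switching equivalence preserves the property of having two eigenvalues and the relationship $Q^2 = (n-1)I + \mu Q$, and since the constructed matrix already satisfies this equation with $\mu = -2 \not\equiv$ the trivial value, I expect nontriviality to follow cleanly once the non-unit entry is exhibited.
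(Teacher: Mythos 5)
Your proposal is correct and takes essentially the same approach as the paper: the paper's proof likewise builds the matrix as the $m$-fold Kronecker product $(Q'+I_9)\otimes\cdots\otimes(Q'+I_9)-I_{9^m}$ from the $9\times 9$ matrix of Theorem~\ref{9x9-sig-thm} via Proposition~\ref{bootstrap-prop}, which is exactly your induction unrolled. The two points you single out for extra care are precisely the ones the paper also checks by hand (closure of $\{1,\omega,\omega^2\}$ under the entrywise products in the Kronecker factors, and nontriviality from the surviving $\omega$ entry), and your supplementary observation that $\mu=-2\neq 9^m-2$ rules out switching equivalence to the trivial matrix is, if anything, a slightly more airtight justification of nontriviality than the paper's.
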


\begin{proof}
When $m=1$ it follows from Theorem~\ref{9x9-sig-thm} that there exists a $9 \times 9$ nontrivial cube root signature matrix $Q'$ satisfying $(Q')^2 = 8I_9 -2Q'$.
To obtain matrices of size $9^m\times 9^m$ we iterate the construction of the preceding proposition:  We form $$Q := [\underbrace{(Q'+I_9) \otimes \ldots \otimes (Q'+I_9)}_{m \textnormal{ times}}] - I_{9^m}.$$  By considering the Kronecker product of matrices, we see that the off-diagonal entries of $Q$ are cube 
 roots of unity, and hence $Q$ is a cube root signature matrix.  
 Furthermore, we see that $Q$ is nontrivial, because the $9\times9$ matrix $Q'$ has a nontrivial entry $\omega$
 in addition to $1$'s, so the Kronecker product with itself produces at least one nontrivial entry.
\end{proof}

\begin{remark}
One can see that Theorem~\ref{bootstrap-9-thm} shows there is an infinite number of nontrivial cube root signature matrices
of arbitrarily large size.
\end{remark}

$ $

\noindent \textsc{Table 1.} Possible $(n,k)$ values for cube root signature matrices with $2\le k<n \leq 100$.

\smallskip

\begin{center}
  \begin{tabular}{|c|c|c|c|c|c|c|}
\hline
$\boldsymbol{n}$ & $\boldsymbol{k}$ & $\boldsymbol{\mu}$ & $\boldsymbol{e}$ & $\boldsymbol{\lambda_1}$ & $\boldsymbol{\lambda_2}$ & \textbf{Do they exist?} \\
\hline
\hline
9 & 6 & -2 & 3 & -7 & 5 & \text{Yes.  (Theorem~\ref{9x9-sig-thm})} \\
\hline
33 & 11 & 4 & 9 & -4 & 8 & \text{Unknown.} \\
\hline
36 & 21 & -2 & 12 & -7 & 5 & \text{Unknown.} \\
\hline
45 & 12 & 7 & 12 & -4 & 11 & \text{Unknown.} \\
\hline
51 & 34 & -5 & 18 & -10 & 5 &\text{Unknown.} \\
\hline
81 & 45 & -2 & 27 & -10 & 8 & \text{Yes.  (Theorem~\ref{bootstrap-9-thm})} \\
\hline
96 & 76 & -14 & 36 & -19 & 5 & \text{Unknown.} \\
\hline
99 & 33 & 7 & 30 & -7 & 14 & \text{Unknown.} \\
\hline
  \end{tabular}
\end{center}

$ $

$ $

\noindent This table contains all possible $(n,k)$ values for nontrivial cube root signature matrices, as determined by the algorithm in Section~\ref{algorithm-sec}.  Thus, if $Q$ is a nontrivial cube root signature matrix of an equiangular $(n,k)$-frame with $n \leq 100$,  then $n$ and $k$ must be the values in one of the rows of this table.  It is unknown whether there exist nontrivial signature matrices for all of the rows in the table.

\providecommand{\bysame}{\leavevmode\hbox to3em{\hrulefill}\thinspace}

\end{document}